\newtheorem {pro}{Proposition}[section]
\newtheorem {thm}[pro]{Theorem}
\newtheorem{lem}[pro]{Lemma}
\theoremstyle{definition}
 \newtheorem {rem}[pro]{Remark}
\newtheorem {dfn}[pro]{Definition}
\newtheorem {defs}[pro]{Definitions}
\newtheorem {exa}[pro]{Example}
\newcommand{\xt}{\tilde{X}}
\newcommand{\R} {\mathbb{R}}
\newcommand{\p}{\check{p}}
\newcommand{\ep}{\varepsilon}
\newcommand{\pa}{\partial}
\title {{\bf A  Lefschetz duality for intersection homology}}
\author{Guillaume Valette}
\address
{Instytut Matematyczny PAN, ul. \'Sw. Tomasza 30, 31-027 Krak\'ow,
Poland} \email{gvalette@impan.pl}
\keywords{Lefschetz duality,  intersection homology, singular sets, pseudomanifolds with boundary}
\thanks{}
\subjclass{ 55N33, 57P10, 32S60}
\begin{document}

\maketitle

\begin{abstract}
We prove a  Lefschetz duality result  for intersection homology. Usually, this result applies to pseudomanifolds with boundary which are assumed to have a "collared neighborhood of their boundary". Our duality does not need this assumption and is a generalization of the classical one.
\end{abstract}

\section{Introduction}
The main feature of intersection homology is that it satisfies Poincar\'e duality for a large class of singular sets, called pseudomanifolds.  This duality is particularly nice when the considered singular sets may be stratified by a stratification having only even dimensional strata, like for instance the complex analytic sets.

In their fundamental paper M. Goresky and R.  MacPherson \cite{gm1} (see also \cite{gm2}) introduced intersection homology, showed that it is finitely generated and independent of the stratification and established their generalized Poincar\'e duality. They also introduce the notion of pseudomanifold with boundary to which a generalized Lefschetz duality applies.

A pseudomanifold is a subset $X$ for which the singular locus is of codimension at least $2$ in $X$ (and is nowhere dense in $X$). Pseudomanifolds with boundary are couples $(X;\pa X)$ such that $X\setminus \pa X$ and $\pa X$ are pseudomanifolds and such that $\pa X$ has a neighborhood  in $X$ which is homeomorphic to a product $\pa X \times [0;1]$. In this paper, we show how the last requirement can be left out without affecting Lefschetz duality.

We consider couples $(X;\partial X)$ with $X$ manifold with boundary $\pa X$ near the top stratum of $\pa X$, such that  $X\setminus \pa X$ and $\partial X$ are both stratified pseudomanifolds  that we call stratified $\pa$-pseudomanifolds, and establish a  more general version of Lefschetz duality. 

This approach is different from the one developed by G. Friedman in \cite{f1,f2,f3} where the author  obtained several very interesting results on pseudomanifolds with possibly one codimensional strata with generalized perversities. The novelty of the present paper is that the allowable chains of $\pa X$ are allowable in $X$.  

In \cite{vlinfty}, the author proves that the cohomology of  $L^\infty$ forms on a compact subanalytic pseudomanifold is isomorphic to intersection cohomology in the maximal perversity.  In \cite{vl1}, we give a Lefschetz duality theorem, relating the $L^\infty$ cohomology to the so-called Dirichlet $L^1$-cohomology.  As a corollary of these two results, on compact subanalytic pseudomanifolds,  we got that the Dirichlet  $L^1$ cohomology is isomorphic to intersection cohomology in the zero perversity. The Lefschetz duality of \cite{vl1} is true for any bounded subanalytic manifold (i. e. we do not assume that the closure is a pseudomanifold) while Lefschetz duality for intersection homology  is usually stated on pseudomanifolds with boundary.  This lead the author to the conclusion that there must be a Lefschetz duality in a slightly more general setting than the framework of pseudomanifolds with boundary in the way that they are usually defined.  The Lefschetz duality  for $\partial$-pseudomanifolds that we develop in this paper is indeed the exact geometric counterpart of the duality observed in \cite{vl1} (on $\pa$-pseudomanifolds).

Part of the problem of Lefschetz duality for intersection homology is  that the intersection homology of the  pair $(X;\partial X)$ is not well defined since the allowable chains of the boundary are not necessarily allowable in  $X$. Allowability is a condition which depends on the choice of a perversity (see definitions below).  We explain how, given a perversity $p$, we can construct a perversity $\p$ for the boundary in a natrual way which makes it possible to extend  Lefschetz duality.

We shall work in the subanalytic framework. In \cite{gm1}, the authors  prefer to work in the PL category but, as they themselves emphasize in the introduction, evertyhing could have been carried out in the subanalytic category. We  avoided sheaf theory, striving to make the proof as elementary as possible. Although  the arguments  presented in \cite{gm1,gm2} (for proving  Poincar\'e duality) seem to apply for proving our theorem,  we shall present a different argument.  Our proof is nevertheless based of their construction of the natural paring.

\subsection*{Content of the paper} In the first section we recall the definitions of intersection homology and stratified pseudomanifold.  In the second section we introduce our notion of stratified $\pa$-pseudomanifold and extend the basic notions to this setting, introducing our "boundary perversity".   We then extend the intersection pairing of \cite{gm1} to our setting. We finally   establish Lefschetz duality for $\pa$-pseudomanifolds,  starting with  some local computations of the intersection homology groups and then  gluing the local information in a fairly classical way.

\subsection*{Some notations and  conventions}  By
"subanalytic" we mean "globally subanalytic", i. e. which remains
subanalytic after  compactifying  $\R^n$ (by $
\mathbb{P}^n$).   Balls in $\R^n$ are denoted $B(x_0;\ep)$ and are considered for the norm $\underset{i\leq n}{\sup} |x_i|$.

Given a set
$X\subset \R^n$, we denote by $C^j(X)$ the singular cohomology
cochain complex. Simplices are defined as
continuous subanalytic maps $\sigma :\Delta _j \to X$, where
$\Delta_j$ is the standard simplex. The coefficient ring will be always be $\R$. We denote by $X_{reg}$ the regular locus of $X$, i. e. the set of points at which $X$ is a manifold (without boundary) of dimension $\dim X$. We denote by $X_{sing}$ its complement in $X$.


\section{Intersection homology }
 We recall the definition of intersection homology as it was introduced by M. Goresky and R. Macpherson \cite{gm1,gm2}.

\begin{defs}\label{del pseudomanifolds}
A subanalytic  subset $X\subset \R^n$ is an  {\bf $l$-dimensional pseudomanifold} if
 $X_{reg}$ is an $l$-dimensional manifold which is  dense in $X$ and  $\dim X_{sing}<l-1$.

A {\bf stratified pseudomanifold} is the data of an $l$-dimensional pseudomanifold $X$ together with  a filtration: $$\emptyset=X_{-1}\subset X_0\subset \dots \subset  X_{l}=X,$$ with $X_{l-1}=X_{l-2}$, such that the subsets  $X_i\setminus X_{i-1}$  constitute a locally topologically trivial stratification.
\end{defs}

\begin{dfn}\label{dfn_pseudo_a_bord}
A {\bf stratified  pseudomanifold with boundary} is a subanalytic couple $(X;\pa X)$ together with a subanalytic  filtration $$\emptyset=X_{-1}\subset X_0\subset \dots \subset X_{l-1} \subset X_{l}=X,$$ 
and
 \begin{enumerate}
\item  $X\setminus \pa X$ is a  $l$-dimensional stratified pseudomanifold (with the filtration $X_j\setminus \pa X$).
\item  $\pa X$ is a  stratified pseudomanifold (with the filtration $X_j':=X_j\cap \pa X$)
\item $\pa X$ has a {\bf stratified collared neighborhood}: there exist a neighborhood $U$ of $\pa X$ in $X$ and a  homeomorphism $h:  \pa X \times [0;1]\to U $ such that $h(U \cap X_j)=X_{j-1}'\times [0;1]$.
\end{enumerate}
\end{dfn}

\begin{dfn}
A {\bf perversity} is a sequence of integers $p = (p_2, p_3,\dots, p_l)$
such that $p_2 = 0$ and $p_{k+1} = p_k$ or $p_k + 1$. A subanalytic subspace
$Y \subset X$ is called  {\bf $(i;p)$-allowable} if $\dim Y \cap
X_{l-k} \leq p_k+i-k$.  Define $I^{p}C_i (X)$ as the subgroup of
$C_i(X)$ consisting of the subanalytic chains $\sigma$ such that $|\sigma|$
is $(p, i)$-allowable and $|\partial \sigma|$ is $(p, i -
1)$-allowable.

 The {\bf $i^{th}$ intersection homology group of perversity $p$}, denoted
$I^{p}H_j (X)$, is the $i^{th}$ homology group of the  chain
complex $I^{p}C_\bullet(X).$  

The {\bf Borel-Moore intersection chain complex} $I^p C_j ^{BM}(X)$ is defined as the chain complex constituted by the locally finite $p$-allowable subanalytic chains. We denote by $I^p H_j ^{BM} (X)$ the Borel-Moore intersection homology groups.
\end{dfn}

\medskip
\subsection{Lefschetz-Poincar\'e duality for pseudomanifolds with boundary.}\label{sect_ih}

 We denote by $t$ {\bf
the maximal perversity}, i. e. $t=(0;1;\dots;l-2)$. Two perversities $p$ and $q$ are said {\bf complement} if $p+q=t$.

\medskip

\begin{thm}(Generalized Lefschetz-Poincar\'e duality
\cite{gm1,gm2,f2,f3,k})\label{thm_poincare_ih}
Let $X$ be a subanalytic compact oriented stratified pseudomanifold with boundary $\pa X$. For any complement
perversities   $p$ and $q$:
$$I^p H_j(X\setminus \pa X)=I^q H_{l-j}^{BM}(X\setminus \pa X).$$
\end{thm}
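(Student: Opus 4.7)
The plan is to recognize the statement as standard Poincar\'e duality for the open oriented pseudomanifold $X\setminus \pa X$. Neither the compactness of $X$ nor the existence of a stratified collared neighborhood actually enters the formula, since both sides of the claimed isomorphism are intrinsic invariants of $X\setminus \pa X$ alone, with $\pa X$ appearing only through its removal.

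First I would verify that $X \setminus \pa X$ is itself an $l$-dimensional oriented stratified pseudomanifold with the induced filtration $X_j \setminus \pa X$. Condition (1) of Definition \ref{dfn_pseudo_a_bord} gives this directly: the strata $X_j \setminus X_{j-1}$ restrict to a locally topologically trivial stratification of $X\setminus \pa X$, with $X_{l-1}\setminus \pa X = X_{l-2}\setminus \pa X$, so that Definition \ref{del pseudomanifolds} is satisfied. Orientability is inherited from $X$, since $(X\setminus \pa X)_{reg}$ is an open subset of $X_{reg}$.

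Second, I would invoke the classical Goresky--MacPherson Poincar\'e duality, together with the refinements of Friedman and King cited as \cite{gm1,gm2,f2,f3,k}: on any oriented $l$-dimensional stratified pseudomanifold $Y$, complementary perversities $p$ and $q$ yield a natural intersection pairing that induces an isomorphism $I^p H_j(Y)\cong I^q H^{BM}_{l-j}(Y)$. Applied to $Y=X\setminus \pa X$, this is exactly the conclusion of Theorem \ref{thm_poincare_ih}.

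The main thing to be careful about is ensuring that all the hypotheses of the cited Poincar\'e duality theorem really hold for $X\setminus \pa X$, which amounts to the local topological triviality of the induced stratification; this is immediate because local triviality on $X$ passes to open subsets. The compactness/boundary hypothesis thus plays only a supporting role here: it supplies the orientation and ensures that $X\setminus\pa X$ is well defined as a stratified pseudomanifold, after which the conclusion is a direct instance of Poincar\'e duality on a (possibly non-compact) oriented pseudomanifold. The genuine work of the paper will be to extend a statement of this form to the $\pa$-pseudomanifolds introduced in the next section, where the collar hypothesis is no longer assumed and the allowability conditions on $\pa X$ must be carefully redefined via the new "boundary perversity" $\p$.
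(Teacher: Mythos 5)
Your proposal is correct and matches the paper's treatment: Theorem \ref{thm_poincare_ih} is stated as a background result with no proof given, the intended justification being exactly what you describe, namely that $X\setminus \pa X$ is an oriented stratified pseudomanifold (with the induced filtration) to which the classical Poincar\'e duality of \cite{gm1,gm2,f2,f3,k} applies, with Borel--Moore homology absorbing the non-compactness. Your remark that the collar and compactness hypotheses play no essential role in this particular formulation is also accurate and consistent with the paper's motivation for the later generalization.
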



\section{$\partial$-Pseudomanifolds.}
 We first introduce the notion of $\partial$-pseudomanifold and then naturally extend  intersection homology to these spaces. Basically, we drop the assumption $(3)$ of having a collared neighborhood (see Definition \ref{dfn_pseudo_a_bord}). Let $X$ be a subanalytic set of dimension $l$.

\begin{dfn}
The {\bf $\partial$-regular locus} of $X$ is the set of points near which the set $X$ is a  manifold with nonempty boundary. We will denote it by $X_{\partial,reg}$.
The closure of $X_{\partial,reg}$  will be called {\bf the boundary of $X$} and will be denoted $\partial X$.
The set $X$ is said to be a  {\bf $\partial$-pseudomanifold}   if $ X\setminus \pa X$ is a pseudomanifold and if  $\dim \partial X\setminus X_{\pa,reg}<l-2$.
\end{dfn}

\begin{exa}
It follows from the definition that if  $X$ is a pseudomanifold then it is a $\partial$-pseudomanifold (with empty boundary). 

 Let us give another example. Let  $f:\R^n \to \R$ be a subanalytic map such that $\dim Sing(f)\cap \{ f=0 \}<n-2$. Then $\{x \in \R^n: f(x) \geq 0\}$ is a subanalytic $\partial$-pseudomanifold.
\end{exa}

Of course if $X$ is a $\partial$-pseudomanifold and  $\partial X$ has a collared neighborhood, then it is a pseudomanifold with boundary in the usual sense. 
Nevetherless, the above example shows that a $\partial$-pseudomanifold does not always admit a collared neighborhood. It follows from the definitions that $\pa X \subset X_{sing}$.

  We will show that Lefschetz duality holds for $\pa$-pseudomanifolds.
We give an example (a double pinched torus in $S^3$) in the last section.

\subsection{Stratified $\partial$-pseudomanifolds.}
\begin{dfn}\label{dfn_pa_pseudo}
A subanalytic $\partial$-pseudomanifold $X$ is {\bf stratified} if there exists a subanalytic  filtration:
$$\emptyset =X_{-1}\subset X_0\subset \dots \subset X_{l-1} \subset X_l=X,$$
with $X_{i}\setminus X_{i-1}$ toplogically trival stratification compatible with $\pa X$ and such that:
 \begin{enumerate}
\item  $X\setminus \partial X$ is a stratified pseudomanifold (with the filtration $X_j\setminus \partial X$).
\item  $\partial X$ is a  stratified pseudomanifold (with the filtration $X_j':=X_j\cap \partial X$).
\end{enumerate}

 \end{dfn}

If we compare with the definition of pseudomanifolds with boundary, we see that the assumption $(3)$ about the stratified collared neighborhood has  been dropped.

 Subanalytic  $\pa$-pseudomanifolds can always be stratified.
We now define the intersection homology of a $\pa$-pseudomanifold. It extends naturally Goresky and MacPherson's definition.

\subsection{Intersection homology of a $\pa$-pseudomanifold.}
\subsection*{The boundary perversity $\p$.}
Given an $l$-perversity $p$, define an $(l-1)$-perversity by:
$$\check{p}_j:=p_{j+1}-p_3, $$
for  $j\geq 2$. It is easily checked from the definition that  $\p$ is a $(l-1)$-perversity. 

 Note that  $p$ and $q$ are complement $l$-perversities iff $\p$ and $\check{q}$ are complement $(l-1)$-perversities.

\begin{exa}
Denote respectively by $0^l$ and $t^l$ the zero and top  $l$-perversities. We have $\check{0}^l=0^{l-1}$, $\check{t}^l=t^{l-1}$. The middle perversities are interchanged by $\check{ }$ in the sense that  $\check{n}^l=m^{l-1}$, $\check{m}^l=n^{l-1}$.
\end{exa}

\subsection*{The intersection homology groups.} Denote by $\Sigma$ a subanalytic  stratification of a subanalytic  $\pa$-pseudomanifold $X$ and by $\Sigma '$ the induced  stratification of $\partial X$ (see Definition \ref{dfn_pa_pseudo} $(2)$).  Fix a perversity $p$.

We say that $Y\subset X$ {\bf is $(j;p)$-allowable} (with respect to $(\Sigma;\Sigma ')$) if $$\dim cl(Y\setminus  \partial X)\cap X_{l-k}\leq j-k+p_k,$$  and if $ Y \cap  \partial X $ is $(j ;\p)$-allowable (w. r. t. $\Sigma '$). A {\bf $j$-chain $\sigma$  is $p$-allowable} if $|\sigma|$ is $(j;p)$-allowable.

Let $I^p C_j(X)$ be the chain subcomplex of $C_j(X)$ constituted by the $p$-allowable $j$-chains $\sigma$ for which  $\pa  \sigma$ is $p$-allowable.
If $X$ is a pseudomanifold then of course this chain complex coincides with the one introduced in \cite{gm1}.

\subsection*{Relative intersection homology of $\pa$-pseudomanifolds.}
The relative intersection homology groups are of importance for Lefchetz duality.

Observe that it follows from this definition that $I^{\p} C_j(\partial X) \subset I^p C_j( X)$ and hence we may set:
 $$I^p C_j (X;\pa X):=\frac{I^p C_j(X)}{I^{\p} C_j(\partial X)}.$$


  As usual we have the following long exact sequence:
$$\dots  \to I^{\p}  H_j( \partial X) \to  I^{p} H_j (X)\to I^p H_j (X; \partial X) \to I^{\p} H_{j-1} ( \partial X) \to \dots .$$

\subsection*{Borel-Moore intersection homology groups for $\pa$-pseudomanifolds.}
 The Borel-Moore chain complex, denoted $I^p C_j^{BM}(X)$, are defined as the locally finite combinations of allowable simplices (with subanalytic support). We denote by $I^pH_j^{BM}(X)$ the resulting homology groups. For any subanalytic open subset $W$ of $X$, define also $I^p C_j^{BM}(X;W)$ as the chain complex constituted by the chains $\sigma \in I^pC_j^{BM}(X)$ such that $|\sigma| \cap W=\emptyset $. Denote by $I^p H_j(X;W)$ the coresponding homology groups.


\section{Two preliminary Lemmas}
 Let $X$ be an oriented locally closed conected subanalytic   stratified  $\partial$-pseudomanifold.

\subsection{A local exact sequence.} We shall need the following local exact sequence for the local computation of the homology groups. The material of this section is quite classical.

\begin{lem}\label{lem_local_exact_sequence}
Let  $x_0 \in X$ and set $X^\ep:=B(x_0;\ep) \cap X$. For $\ep>0$ small enough there is a long exact sequence:
\begin{equation}\label{eq_loc_exact_sequence}\dots \to I^p H_j (X^\ep) \to I^p H_j ^{BM}(X^\ep) \to I^p H_{j-1} (X^\ep \setminus x_0) \to \dots . \end{equation}
\end{lem}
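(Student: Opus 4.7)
The plan is to realize the long exact sequence as the one induced by a short exact sequence of chain complexes, where the middle complex is the Borel--Moore one and the quotient is identified with the intersection chain complex of $X^\ep\setminus x_0$ (shifted by one). Concretely, for $\ep>0$ small enough I would invoke the subanalytic conic structure theorem: the inclusion $L:=X\cap S(x_0;\ep)\hookrightarrow X^\ep$ extends to a stratification-preserving subanalytic homeomorphism between $X^\ep$ and the open cone $cL$ (this works whether $x_0$ lies in $X\setminus \pa X$ or in $\pa X$, the stratified structure of $X$ being respected). In particular $X^\ep\setminus x_0$ deformation retracts onto $L$ by a stratified, and hence allowability-preserving, retraction.

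Next, consider the tautological short exact sequence
\begin{equation*}
0\longrightarrow I^pC_\bullet(X^\ep)\longrightarrow I^pC_\bullet^{BM}(X^\ep)\longrightarrow Q_\bullet\longrightarrow 0,
\end{equation*}
where $Q_\bullet$ is the quotient complex of Borel--Moore chains modulo compactly supported ones. The resulting long exact sequence in homology has the right shape once I identify $H_j(Q_\bullet)$ with $I^pH_{j-1}(X^\ep\setminus x_0)$. For this identification I would use two explicit operations made available by the conic structure:
\begin{enumerate}
\item (link at $x_0$) given $\sigma\in I^pC_j^{BM}(X^\ep)$, cut it by a small sphere $S(x_0;\delta)$ (transverse to all strata by the subanalytic triviality) to obtain $\ell(\sigma)\in I^pC_{j-1}(X^\ep\setminus x_0)$; since $\partial \sigma$ is $p$-allowable and the cutting decreases each relevant dimension by one, allowability is preserved, and the operation is well defined modulo compactly supported chains and boundaries;
\item (cone) given a $p$-allowable cycle $\tau$ of dimension $j-1$ in $X^\ep\setminus x_0$, form the Borel--Moore chain $c\tau\subset cL\cong X^\ep$ using the homeomorphism of Step 1; the identity $\pa (c\tau)=\tau\pm c(\pa\tau)$ shows that $c\tau$ is a Borel--Moore cycle whenever $\tau$ is.
\end{enumerate}

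I would then check that the two maps induced on homology by $\ell$ and $c$ are inverses of each other, which gives $H_j(Q_\bullet)\cong I^pH_{j-1}(X^\ep\setminus x_0)$ and hence the desired long exact sequence. The boundary map $I^pH_j^{BM}(X^\ep)\to I^pH_{j-1}(X^\ep\setminus x_0)$ is then literally $[\sigma]\mapsto[\ell(\sigma)]$, which fits with the geometric interpretation: a Borel--Moore cycle accumulating at $x_0$ leaves a trace on the link.

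\textbf{Main obstacle.} The real work is the verification of allowability throughout, especially in the cone construction $c\tau$. One must control the intersection of $c\tau$ with each $X_{l-k}$ (and with the strata of $\pa X$, using $\check p$ when $x_0\in\pa X$); the dimension estimate in the cone gains exactly one on each stratum, and this is precisely the classical Goresky--MacPherson calculation behind the cone formula, which I would adapt, taking care to treat the boundary strata with the perversity $\check p$ introduced earlier so that the chains produced indeed live in $I^pC_\bullet(X^\ep)$ in the sense of our $\pa$-pseudomanifold definition.
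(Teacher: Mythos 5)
Your overall strategy is sound and is essentially a repackaging of the paper's argument: both proofs rest on the local conic structure theorem and both reduce the lemma to a single identification. The paper takes the long exact sequence of the pair $(X^\ep;X^\ep\setminus x_0)$ and identifies the middle term $I^pH_j(X^\ep;X^\ep\setminus x_0)$ with $I^pH_j^{BM}(X^\ep)$, using that $X^\ep\setminus x_0$ retracts onto the link; you instead take the short exact sequence of chain complexes with $I^pC^{BM}_\bullet(X^\ep)$ in the middle and identify the homology of the quotient $Q_\bullet$ with $I^pH_{j-1}(X^\ep\setminus x_0)$. These are equivalent routes, and the verifications you postpone (allowability of truncations, allowability of cones on allowable cycles, the boundary strata being governed by $\check{p}$) are exactly the ones the paper's one-line isomorphism also requires. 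The paper's packaging is slightly more economical, since the exact sequence of a pair comes for free, whereas you must in addition check that your two operations descend to the quotient complex.

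There is one concrete point where your construction would fail as written. Since $X^\ep=B(x_0;\ep)\cap X$ is an open cone, $x_0$ is an interior point: a locally finite chain is finite near $x_0$, and the non-compactness of a Borel--Moore chain lives at the outer sphere (the link), not at the cone point. Consequently, if $\sigma\in I^pC_j^{BM}(X^\ep)$ has compact boundary (i.e. represents a cycle of $Q_\bullet$), its trace on a \emph{small} sphere $S(x_0;\delta)$ need not be a cycle, because the compact chain $\pa\sigma$ may cross $S(x_0;\delta)$; moreover a compactly supported allowable chain passing through $x_0$ has a nonzero trace on every small sphere, so $\ell$ does not descend to $Q_\bullet$. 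The repair is to cut by $S(x_0;\delta)$ with $\delta$ close to $\ep$, beyond the compact supports of $\pa\sigma$ and of any compact correction term; equivalently, observe that $Q_\bullet$ computes the intersection homology of the end of $X^\ep$, which by the conic structure is a collar $L\times(0;1)$ over the link $L$, whence $H_j(Q_\bullet)\simeq I^pH_{j-1}(L)\simeq I^pH_{j-1}(X^\ep\setminus x_0)$. Your closing picture of a cycle ``accumulating at $x_0$'' describes the relative group $I^pH_j(X^\ep;X^\ep\setminus x_0)$ (the paper's middle term) rather than the Borel--Moore group; the two agree, but only via the isomorphism that is the actual content of the proof.
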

\begin{proof}
Observe that we have an exact sequence:
$$ \dots \to I^p H_j (X^\ep) \to I^p H_j (X^\ep;X^\ep\setminus x_0) \to I^p H_{j-1} (X^\ep\setminus x_0 ) \to \dots . $$
Due to  the local conic structure of subanalytic sets,  $X^\ep \setminus x_0$ retracts by deformation onto $S(x_0;\ep)\cap X$ and    we have an isomorphism:
$$ I^p H_j (X^\ep;X^\ep\setminus x_0)\simeq  I^p H_j ^{BM}(X^\ep) , $$
which yields the result.
\end{proof}


%

\begin{lem}\label{lem_produits} Let $p'$ be the $(l-1)$ perversity defined by $p'_i:=p_i$ if $i \leq l-1$. Then
\begin{equation}I^{p'} H_j (X)=I^p H_j (X\times (0;1)),\end{equation}
(with the product stratification) and the same holds true for $(X;\partial X).$
\end{lem}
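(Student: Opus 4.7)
The plan is to prove the isomorphism at the chain level using the inclusion $i:X\to X\times(0;1)$, $x\mapsto(x,1/2)$, and the projection $\pi:X\times(0;1)\to X$. The key observation is that the product stratification is codimension-preserving: if $X_{-1}\subset X_0\subset\dots\subset X_{l-1}=X$ is the filtration of $X$, then $(X\times(0;1))_m=X_{m-1}\times(0;1)$ for $m\geq 1$ and $(X\times(0;1))_0=\emptyset$. Hence for any subanalytic $Y\subset X$,
$$\dim\bigl((Y\times\{1/2\})\cap (X\times(0;1))_{l-k}\bigr)=\dim(Y\cap X_{l-1-k}),$$
so $(j;p)$-allowability of $Y\times\{1/2\}$ in $X\times(0;1)$ is equivalent to $(j;p')$-allowability of $Y$ in $X$ (the constraint at $k=l$ is vacuous since $(X\times(0;1))_0$ is empty). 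This shows that $i_\ast$ restricts to a chain map $I^{p'}C_j(X)\to I^p C_j(X\times(0;1))$. Using $\dim\pi(A)\leq\dim A$ together with $\pi^{-1}(X_{l-1-k})=(X\times(0;1))_{l-k}$, the projection induces a chain map $\pi_\ast:I^p C_j(X\times(0;1))\to I^{p'} C_j(X)$.

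The equality $\pi_\ast i_\ast=\mathrm{id}$ is immediate. To go the other way I would use the affine homotopy $H_s(x,t)=(x,s t+(1-s)/2)$ and the standard prism decomposition of $\Delta_j\times[0;1]$ into $(j+1)$-simplices to construct a chain homotopy $P:I^p C_j(X\times(0;1))\to I^p C_{j+1}(X\times(0;1))$ satisfying $\pa P+P\pa=\mathrm{id}-i_\ast\pi_\ast$. The critical verification is that $P$ takes values in the intersection chain complex. Since $|P(\sigma)|\subset \pi(|\sigma|)\times(0;1)$, one has the inclusion
$$|P(\sigma)|\cap (X\times(0;1))_{l-k}\subset \pi\bigl(|\sigma|\cap (X\times(0;1))_{l-k}\bigr)\times(0;1),$$
whence $\dim(|P(\sigma)|\cap (X\times(0;1))_{l-k})\leq \dim(|\sigma|\cap (X\times(0;1))_{l-k})+1\leq p_k+j-k+1$, which is precisely the bound required for $(j+1;p)$-allowability. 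Allowability of $\pa P(\sigma)=\sigma-i_\ast\pi_\ast\sigma-P(\pa\sigma)$ then reduces to the same estimate applied to $\pa\sigma$.

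For the relative statement I would observe that $\pa(X\times(0;1))=\pa X\times(0;1)$ and that the boundary perversities agree in the sense $\check{(p')}_j=p'_{j+1}-p'_3=p_{j+1}-p_3=(\check p)_j$ for $j\leq l-2$, so applying the construction inductively to $\pa X$ and $\pa X\times(0;1)$ (one dimension lower) is consistent. The maps $i_\ast$, $\pi_\ast$ and the homotopy $P$ all send chains with support in the boundary to chains with support in the boundary, hence descend to the relative complexes $I^p C_\bullet(X\times(0;1);\pa X\times(0;1))$ and $I^{p'} C_\bullet(X;\pa X)$ and give the isomorphism there; equivalently one may combine the two absolute statements via the five-lemma applied to the long exact sequences of the pairs. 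The only delicate point in the whole argument is the allowability check for $P(\sigma)$: the "$+1$" gained from the extra interval direction exactly matches the "$+1$" coming from the degree shift $j\mapsto j+1$, so there is no slack.
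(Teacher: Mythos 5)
Your argument is correct and takes essentially the same route as the paper, whose entire proof is the assertion that the inclusion $x\mapsto(x;\tfrac{1}{2})$ sends $p'$-allowable chains to $p$-allowable chains and induces an isomorphism on (relative) intersection homology. You have simply supplied the details the paper leaves implicit — the projection, the prism homotopy, the compatibility of the boundary perversities, and the key allowability estimate in which the extra dimension contributed by the interval factor is exactly absorbed by the degree shift $j\mapsto j+1$.
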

\begin{proof}
The inclusion $i: X  \to X \times (0;1)$, $x \mapsto (x;\frac{1}{2})$ clearly sends a $p'$ allowable chain onto a $p$ allowable chain. It induces an isomorphism between the respective homology groups, as well as between the relative intersection homology groups.
\end{proof}

\begin{rem}\label{rem_produit}
For Borel Moore homology an analogous statement holds true:
$$I^{p'} H ^{BM}_{j} (X)=I^p H_{j+1} ^{BM} (X\times (0;1)).$$
\end{rem}



\section{Local computations of $IH$.} As in the case of pseudomanifolds \cite{gm2,k}, the most important step is to compute the homology groups locally. This already yields Lefschetz duality "locally". In  section \ref{sect_lef} we shall glue this local information to establish Lefschetz duality globally.
The local computation is quite classical.
 Let $X$ be a subanalytic stratified $l$-dimensional $\partial$-pseudomanifold.
\begin{lem}\label{lem_local}
For any perversity $p$, the mappings  $I^p H_j (X^\ep \setminus x_0)\to I^p H_j(X^\ep),$ and $I^p H_j (X^\ep \setminus x_0;\pa X^\ep \setminus x_0)\to I^p H_j(X^\ep;\pa X^\ep ),$ induced by inclusion,  are onto. The boundary operator  $I^p H_j^{BM} (X^\ep )\to I^p H_j^{BM}(X^\ep\setminus x_0)$ constructed in Lemma \ref{lem_local_exact_sequence} is one-to-one.
\end{lem}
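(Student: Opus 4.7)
The plan is to exploit the local conic structure of subanalytic sets at $x_0$ together with the standard cone construction on chains. For $\ep>0$ small there is a subanalytic homeomorphism $X^\ep \simeq cL$ onto the open cone on the link $L=X\cap S(x_0;\ep)$, sending $x_0$ to the cone point and compatible with the stratification (and with $\pa X$). Via this structure, each simplex $\sigma:\Delta_j\to X^\ep$ has a canonical cone $c\sigma:\Delta_{j+1}\to X^\ep$ obtained by joining $\sigma$ to $x_0$ radially; extended linearly, it satisfies $\pa(c\sigma)=\sigma-c(\pa\sigma)$.

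The main obstacle is the following allowability statement. Write $i$ for the dimension of the stratum of $x_0$, so $x_0\in X_i\setminus X_{i-1}$. Then, for $j\geq l-i-p_{l-i}$, the cone sends $p$-allowable $j$-chains to $p$-allowable $(j+1)$-chains; for $j<l-i-p_{l-i}$, the allowability inequality $\dim(|\sigma|\cap X_i)\leq p_{l-i}+j-(l-i)<0$ forces $|\sigma|\cap X_i=\emptyset$, hence $x_0\notin|\sigma|$. The first half is a stratum-by-stratum dimension count: for each $k$ with $l-k\geq i$, the cone over $|\sigma|\cap X_{l-k}$ stays in $X_{l-k}$ and has dimension one more, so the inequality for $c\sigma$ at codimension $k$ reduces to that for $\sigma$; strata with $l-k<i$ are not touched by the cone and contribute nothing new.

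Granting this, (1) is immediate: for $j<l-i-p_{l-i}$ a cycle $\sigma$ already lies in $I^pC_j(X^\ep\setminus x_0)$, while for $j\geq l-i-p_{l-i}$ the chain $c\sigma\in I^pC_{j+1}(X^\ep)$ satisfies $\pa(c\sigma)=\sigma$, so $[\sigma]=0$; in both cases $[\sigma]$ is in the image of the inclusion. For (2), if $x_0\notin\pa X$ then $\pa X^\ep=\emptyset$ for $\ep$ small and the statement reduces to (1); if $x_0\in\pa X$, the same conic structure applies to $\pa X$, and the definition $\p_j=p_{j+1}-p_3$ is arranged precisely so that the cone operator preserves $\p$-allowability on $\pa X^\ep$ in the analogous range, whence $\sigma=\pa(c\sigma)+c(\pa\sigma)$ with $c\sigma\in I^pC_{j+1}(X^\ep)$ and $c(\pa\sigma)\in I^{\p}C_j(\pa X^\ep)$ kills $[\sigma]$ in $I^pH_j(X^\ep;\pa X^\ep)$.

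Finally, (3) is a formal consequence of exactness of the sequence of Lemma~\ref{lem_local_exact_sequence}. By exactness at $I^pH_j^{BM}(X^\ep)$, injectivity of the boundary map $I^pH_j^{BM}(X^\ep)\to I^pH_{j-1}(X^\ep\setminus x_0)$ is equivalent to the vanishing of the image of $I^pH_j(X^\ep)\to I^pH_j^{BM}(X^\ep)$, which in turn, by exactness at $I^pH_j(X^\ep)$, is equivalent to surjectivity of the inclusion-induced map $I^pH_j(X^\ep\setminus x_0)\to I^pH_j(X^\ep)$, i.e.\ to (1).
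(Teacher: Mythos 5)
Your proof is correct and takes essentially the same route as the paper's: the surjectivity claims follow from the dichotomy that a cycle either misses $x_0$ by allowability or can be coned off to $x_0$ via the local conic structure, and the Borel--Moore injectivity is read off from the exact sequence of Lemma~\ref{lem_local_exact_sequence}. You merely make explicit the dimension counts and the cone formula that the paper's one-line argument leaves implicit.
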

\begin{proof}
Let  $\sigma \in I^p H_j (X^\ep)$ be a nonzero class. Then $|\sigma|$ does  not contains $x_0$ (since otherwise $\sigma$ could be retracted onto $x_0$). Hence it lies in $X^\ep \setminus x_0$. This argument also applies to the relative homology  and the assertion on Borel-Moore homology is a consequence of Lemma \ref{lem_local_exact_sequence}. \end{proof}

\begin{lem}\label{lem_calcul local} Let $x_0  \in \partial X \cap X_0$ and set  $X^\ep :=X\cap B(x_0;\ep)$.  For any $\ep>0$ small enough:
\begin{enumerate}
\item If $p_3=0$  then:$$I^p H_j(X^\ep)\simeq \left\{
\begin{array}{ll}I^p H_j (X^\ep \setminus x_0),\qquad  \mbox{if}
\quad p_{l} < l-j-2, \\   0,
 \qquad  \hskip 2.2cm \mbox{if} \quad p_l>l-j-2.
\end{array}
\right.$$

\bigskip

\item  If $p_3=1$ then:$$I^p H_j(X^\ep)\simeq \left\{
\begin{array}{ll}I^p H_j (X^\ep \setminus x_0),\qquad  \mbox{if}
\quad p_{l} < l-j-1, \\   0,
 \qquad  \hskip 2.2cm \mbox{otherwise} .
\end{array}
\right.$$

\bigskip

\item If $p_3=0$  then:$$I^p H_j(X^\ep;\pa X^\ep)\simeq \left\{
\begin{array}{ll} I^p H_j (X^\ep \setminus x_0;\pa X^\ep \setminus x_0),\qquad  \mbox{if}
\quad p_{l} \leq l-j-2, \\   0,
 \qquad  \hskip 3.8cm \mbox{otherwise.}
\end{array}
\right.$$

\bigskip


\item If $p_3=1$  then:$$I^p H_j(X^\ep;\pa X^\ep)\simeq \left\{
\begin{array}{ll} I^p H_j (X^\ep \setminus x_0;\pa X^\ep \setminus x_0),\qquad  \mbox{if}
\quad p_{l} < l-j-1, \\   0,
 \qquad  \hskip 3.8cm \mbox{if} \quad  p_{l} > l-j-1.
\end{array}
\right.$$

\end{enumerate}

Furthermore, the isomorphisms  are induced by the  natural inclusions.
\end{lem}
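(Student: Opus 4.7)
The argument is a classical coning computation, adapted to the $\pa$-pseudomanifold setting. Fix $\ep>0$ small enough that local topological triviality of the subanalytic stratification provides a stratified homeomorphism $X^\ep \approx c_{x_0}(L)$, where $L:=X \cap S(x_0;\ep)$ is a stratified $\pa$-pseudomanifold of dimension $l-1$. For a subanalytic chain $\sigma$ with $x_0\notin |\sigma|$, write $c\sigma$ for the cone on $\sigma$ with vertex $x_0$; then $\pa(c\sigma)=\sigma - c(\pa \sigma)$. All four claims will be proved by determining precisely when such cones are allowable.

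\emph{Vanishing in (1) and (2).} By the proof of Lemma~\ref{lem_local}, any class in $I^p H_j(X^\ep)$ is represented by a cycle $\sigma$ with $x_0\notin |\sigma|$; such a cycle bounds as soon as $c\sigma\in I^p C_{j+1}(X^\ep)$. Away from $x_0$, allowability of $c\sigma$ is inherited from $\sigma$. At the vertex, two conditions arise. The interior condition, from $x_0\in X_0$, reads
\begin{equation*}
0=\dim\bigl(cl(|c\sigma|\setminus \pa X)\cap X_0\bigr)\leq (j+1)-l+p_l,\qquad \text{i.e.\ } p_l\geq l-j-1.
\end{equation*}
The boundary condition, requiring the $(j+1)$-chain $c(\sigma\cap \pa X)$ to be $\p$-allowable at $x_0\in X_0'$ in the $(l-1)$-dimensional pseudomanifold $\pa X^\ep$, reads
\begin{equation*}
0\leq (j+1)-(l-1)+\p_{l-1}=j+2-l+p_l-p_3,\qquad \text{i.e.\ } p_l\geq l-j-2+p_3.
\end{equation*}
For $p_3=0$ the interior condition dominates and yields vanishing whenever $p_l > l-j-2$, which is (1). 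For $p_3=1$ both conditions coincide, giving vanishing whenever $p_l\geq l-j-1$, which is (2).

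\emph{Vanishing in (3) and (4).} A relative $j$-cycle is a chain $\sigma\in I^p C_j(X^\ep)$ with $\pa \sigma\in I^{\p}C_{j-1}(\pa X^\ep)$. The cone identity $\sigma=\pa(c\sigma)+c(\pa \sigma)$ shows $[\sigma]=0$ in $I^p H_j(X^\ep;\pa X^\ep)$ provided $c\sigma\in I^p C_{j+1}(X^\ep)$ and $c(\pa \sigma)\in I^{\p}C_j(\pa X^\ep)$. The first is the interior condition $p_l\geq l-j-1$ above; the second, applied to the $(j-1)$-chain $\pa\sigma$, reads at the vertex $p_l-p_3\geq l-j-1$, i.e.\ $p_l\geq l-j-1+p_3$. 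For $p_3=0$ both amount to $p_l\geq l-j-1$, proving (3); for $p_3=1$ the boundary condition dominates and gives $p_l>l-j-1$, proving (4).

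\emph{Isomorphism.} Surjectivity of the inclusion maps is Lemma~\ref{lem_local}. For injectivity, the long exact sequence of Lemma~\ref{lem_local_exact_sequence} (and its relative analogue) gives
\begin{equation*}
I^p H_{j+1}^{BM}(X^\ep)\xrightarrow{\ \pa\ } I^p H_j(X^\ep \setminus x_0)\to I^p H_j(X^\ep).
\end{equation*}
The one-to-one statement in Lemma~\ref{lem_local} applied in degree $j+1$ says $\pa$ is injective, so the kernel of the inclusion is identified with $I^p H_{j+1}^{BM}(X^\ep)$; injectivity is thus equivalent to the vanishing of this Borel--Moore group in the claimed range. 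This vanishing is obtained by the same coning argument performed in $I^p C_\bullet^{BM}(X^\ep)$: local finiteness forces only finitely many simplices of a BM cycle to meet a small neighborhood of $x_0$, so those simplices can be coned to $x_0$ after a harmless truncation of the remaining ones near the sphere $X\cap S(x_0;\ep)$, and the vertex conditions on $(p_3,p_l)$ controlling allowability of the cone are exactly those of (1)--(4). That the resulting isomorphisms are induced by inclusion is automatic.

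\emph{Main obstacle.} The heart of the argument is the simultaneous bookkeeping of the two allowability conditions at $x_0$: the interior one, governed by $p_l$, and the boundary one, governed by $\p_{l-1}=p_l-p_3$. Their interplay as $p_3$ varies is precisely what produces the four-case dichotomy of the statement. The Borel--Moore variant required for the isomorphism step also demands extra care in order to preserve local finiteness of the cone, which is why one must truncate near the link $L$ before coning down to the vertex.
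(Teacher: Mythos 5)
Your treatment of the vanishing cases is correct and is essentially the paper's argument: cone a representative to $x_0$ and check the two allowability conditions at the vertex, the interior one governed by $p_l$ and the boundary one governed by $\p_{l-1}=p_l-p_3$; the inequalities you derive and the resulting four-way case split agree with the paper.

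The gap is in the injectivity half. The paper proves injectivity directly: if $\sigma=\pa\tau$ with $\tau\in I^pC_{j+1}(X^\ep)$ (resp.\ in the relative complex), then in the stated ranges the allowability bounds $\dim cl(|\tau|\setminus\pa X)\cap X_0\leq j+1-l+p_l$ and $\dim |\tau|\cap\pa X\cap X_0\leq j+1-(l-1)+\p_{l-1}$ are negative, so $x_0\notin|\tau|$ and $\tau$ already exhibits $\sigma$ as a boundary in $X^\ep\setminus x_0$; the one delicate case, (3) with $p_l=l-j-2$, is handled by noting that $\tau$ can then meet $x_0$ only inside $\pa X^\ep$, and the simplices of $\tau$ lying in $\pa X^\ep$ may be discarded in the relative complex. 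You instead reduce injectivity to the vanishing of $I^pH_{j+1}^{BM}(X^\ep)$ via the sequence of Lemma \ref{lem_local_exact_sequence}. The reduction is legitimate for (1)--(2), but your proof of that vanishing --- coning the finitely many simplices of a BM cycle near $x_0$ down to the vertex --- cannot work: the cone of a $(j+1)$-chain to $x_0$ is allowable only when $p_l\geq l-j-2$, i.e.\ precisely \emph{outside} the range $p_l<l-j-2$ where you need the BM group to vanish. In that range a BM cycle in fact avoids a whole neighborhood of $x_0$, and the correct mechanism for its bounding is a push toward the open end $X\cap S(x_0;\ep)$ (a product/restriction argument over the link), not a cone to the vertex; note also that the paper obtains the BM computation (Remark \ref{rem_calcul}) as a \emph{consequence} of this lemma, so your order of deduction obliges you to supply such an independent argument. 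Finally, for (3)--(4) you invoke a relative analogue of Lemma \ref{lem_local_exact_sequence} that is nowhere established, and your sketch silently skips the borderline case $p_l=l-j-2$ of (3), which is exactly where the direct approach needs the extra step of discarding simplices contained in $\pa X^\ep$.
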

\begin{proof}
%
In every case we only check injectivity since surjectivity is a consequence of Lemma \ref{lem_local}.

{\it Proof of $(1)$.} Assume  $p_3=0$.

Suppose  that $p_{l} < l-j-2$, consider a cycle  $\sigma$  of $I^p C_j (X^\ep \setminus x_0)$, and assume that  $\sigma=\partial \tau$ for some  $\tau \in I^p C_{j+1} (X^\ep )$. Then,  as $|\tau |\cap \partial X$ is $(\p ;j+1)$-allowable,
$$\dim |\tau|\cap X_{0} \cap \partial X \leq j+1-(l-1)+p_{l}<0.$$
This entails that $|\tau|$ may not contain $x_0$, showing that the map induced by inclusion $I^p H_j (X^\ep \setminus x_0)\to I^p H_j(X^\ep)$ is one-to-one.



If $p_{l} > l-j-2$ then $(j+1)-l+p_l \geq  0$. This means that the support of a $(j+1;p)$  allowable chain may contain the point $x_0$. Thus the retraction by deformation  to $x_0$ of  any     $\sigma \in I^p H_j(X^\ep)$ gives rise to a chain $\tau \in I^p C_{j+1} (X)$ such that $\sigma=\pa \tau $ in $I^p H_j(X^\ep)$. This  shows that  $I^p H_j(X^\ep)$ is zero in this case.

{\it Proof of $(2)$.}  Suppose now $p_3=1$.

 Assume first that $p_{l} < l-j-1$, consider a cycle  $\sigma$  of $I^p C_j (X^\ep\setminus x_0)$, and suppose that there is a  $\tau$  in $I^p C_{j+1} (X^\ep)$ with $\sigma=\partial \tau$.

 Then,  as $|\tau |\cap \pa X$ is $(j+1;\p)$ allowable,
$$\dim |\tau|\cap X_{0}\cap \pa X \leq j+1-(l-1)+\p_{l-1}<0.$$
The same applies to $cl(|\tau|\setminus \pa X^\ep)$.  This entails that $|\tau|$ may not contain $x_0$, showing that the map induced by inclusion $I^p H_j (X^\ep \setminus x_0)\to I^p H_j(X^\ep)$ is one-to-one.

If $p_{l} \geq l-j-1$ then $(j+1)-(l-1)+\p _{l-1}  \geq 0$ and  $(j+1)-l+p _l  \geq 0$. This means that $(j+1;p)$  allowable chains may meet $x_0$. The retraction  to $x_0$ of  any     $\sigma \in I^p H_j(X^\ep)$ gives rise to a chain $\tau \in I^p C_{j+1} (X^\ep)$ such that $\sigma=\pa \tau $ in $I^p H_j(X^\ep)$. This  shows that  $I^p H_j(X^\ep)$ is zero.


\bigskip

 We now consider the relative homology.

\medskip

{\it Proof of $(3)$.} Assume $p_3=0$.

 If $p_{l} > l-j-2$ then $j-(l-1)+\p_{l-1}\geq 0$ and $(j+1)-l+p_l \geq  0$. This means that the support of any element of $ I^pC_{j+1} (X^\ep;\pa X^\ep)$ may contain the point $x_0$. Consequently, the retract by deformation $\tau$ of any $\sigma \in I^p H_j (X^\ep;\pa X^\ep)$ is $p$-allowable,  showing that  $ I^p H_j(X^\ep;\partial X^\ep)$ is zero in this case.


If now $p_l\leq l-j-2$, let $\sigma \in I^p H_j(X^\ep\setminus x_0;\pa X^\ep\setminus x_0)$ and let $\tau \in I^p C_{j+1}(X^\ep;\pa X^\ep)$ be such that $\pa \tau =\sigma$. As $\tau$ is $p$-allowable we have $$\dim cl(|\tau|\setminus \pa X^\ep)\cap X_0\leq j+1-l+p_l\leq -1.$$
Therefore, there is a small neighborhood of $x_0$ in $X^\ep$ such that:
$$U\cap |\tau| \subset \pa X^\ep.$$


 Subdividing the simplices, we may assume that all those (of the chain $\tau$) which contain the point $x_0$ fit in $U$. As they are all zero in $ I^p C_{j+1}(X^\ep;\pa X^\ep)$ we can drop them without affecting the fact that $\pa \tau =\sigma$ in  $I^p H_{j}(X^\ep;\pa X^\ep)$. In other words, we can assume that $\tau \in  I^p C_{j+1}(X^\ep\setminus x_0;\pa X^\ep\setminus x_0)$, as required.

{\it Proof of $(4)$}. Assume that  $p_3=1$.

 Take $\sigma \in I^p H_j(X^\ep\setminus x_0;\pa X^\ep\setminus x_0)$ and let $\tau \in I^p C_{j+1}(X^\ep;\pa X^\ep)$ be such that $\pa \tau =\sigma$. If $p_l<l-j-1$ then  $(j+1)-l+p_l <0$. This entails that $|\tau|$ may not contain $x_0$, showing that the map induced by inclusion $I^p H_j (X^\ep \setminus x_0;\pa X^\ep \setminus x_0)\to I^p H_j(X^\ep;\pa X^\ep )$ is one-to-one.

\medskip

If now $p_l>l-j-1$, then $j+1-l+p_l >0$ and $j-(l-1)+\p_{l-1}>-1$ and thus the retraction by deformation to $x_0$ of  any     $\sigma \in I^p H_j(X^\ep;\pa X^\ep)$ gives rise to a chain $\tau \in I^p C_{j+1} (X^\ep;\pa X^{\ep})$ such that $\sigma=\pa \tau $ in $I^p H_j(X^\ep;\pa X^\ep)$. This  shows that  $I^p H_j(X^\ep;\pa X^\ep)$ is zero.
\end{proof}

\begin{rem}\label{rem_calcul}
Thanks to the exact sequence (\ref{eq_loc_exact_sequence})   we may derive from the above lemma that
\begin{enumerate}
\item If $p_3=0$  then:$$I^p H_j^{BM}(X^\ep)\simeq \left\{
\begin{array}{ll}  I^p  H_{j-1} (X^\ep \setminus x_0),\qquad  &\mbox{if}
\quad p_{l} > l-j-1, \\   0,
 &\mbox{if} \quad  p_{l} < l-j-1.
\end{array}
\right.$$

\item If $p_3=1$  then:$$I^p H_j^{BM}(X^\ep)\simeq\left\{
\begin{array}{ll}  I^p H_{j-1} (X^\ep \setminus x_0)\qquad  &\mbox{if}
\quad p_{l} > l-j-1, \\   0,
 &\mbox{otherwise.}
\end{array}
\right.$$
\end{enumerate}
Furthermore, the isomorphism is induced by the boundary operator of the exact sequence (\ref{eq_loc_exact_sequence}).
\end{rem}

The case where $j=l-p_l-1$ and $p_3=1$ is more delicate and is adressed separately in the following lemma.

\begin{lem}\label{lem_calcul_image}
Let $p$ and $q$ be complement perversities with $p_3=1$ and set $j=l-p_l-1$. Let $x_0 \in \pa X\cap X_0$ .
\begin{enumerate}
\item Let  $$b:I^p H_{j} (X^\ep \setminus x_0;\pa X^\ep \setminus x_0)\to I^p H_{j} ( X^\ep ;\pa X^\ep)$$ be the map induced by inclusion. Then $$\ker b= \ker \, \pa $$ where $$\pa :I^p H_{j} (X^\ep \setminus x_0;\pa X^\ep \setminus x_0)\to I^p H_{j-1} ( \pa X^\ep \setminus x_0) $$ is induced by the boundary operator.
\item Let  $$b':I^q H_{l-j} ^{BM} (X^\ep)\to I^q H_{l-j}^{BM} ( X^\ep  \setminus x_0)$$ be the natural map. Then $$Im\, b'= Im\, i_* ,$$ where $$i_* :I^q H_{l-j} ^{BM} (\pa X^\ep \setminus x_0)\to
        I^q H_{l-j}^{BM} ( X^\ep \setminus x_0)$$ is induced inclusion.
\end{enumerate}

\end{lem}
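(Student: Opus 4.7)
The plan is to prove both parts by chasing commutative squares of long exact sequences of the pairs $(X^\ep, \pa X^\ep)$ and $(X^\ep\setminus x_0, \pa X^\ep\setminus x_0)$, exploiting that $j = l-p_l-1$ is precisely the borderline between the two cases of Lemma~\ref{lem_calcul local}. Off this borderline the inclusion $b$ (resp.\ $b'$) is already an isomorphism; on the borderline, the kernel (resp.\ image) is exactly described by the auxiliary boundary operator (resp.\ inclusion).

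For part~(1), I would consider the commutative square
\begin{equation*}
\begin{array}{ccc}
I^p H_j(X^\ep\setminus x_0;\pa X^\ep\setminus x_0) & \xrightarrow{b} & I^p H_j(X^\ep;\pa X^\ep) \\
\pa \downarrow & & \pa' \downarrow \\
I^{\p} H_{j-1}(\pa X^\ep\setminus x_0) & \xrightarrow{i_*} & I^{\p} H_{j-1}(\pa X^\ep)
\end{array}
\end{equation*}
whose vertical arrows are the connecting morphisms of the LES of each pair. Because $p_l = l-j-1$, Lemma~\ref{lem_calcul local}(2) places us in the \emph{otherwise} case, so $I^p H_j(X^\ep)=0$, and the pair LES then makes $\pa'$ injective. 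Applying the cone formula to the pseudomanifold $\pa X^\ep$ with perversity $\p$ (noting $\p_{l-1}=p_l-1$ and checking $j-1 = l-p_l-2 < l-2-\p_{l-1}$) shows $i_*$ is an isomorphism. The diagram chase $b(\sigma) = 0 \iff \pa'b(\sigma) = i_*\pa(\sigma) = 0 \iff \pa(\sigma)=0$ finishes~(1).

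For part~(2) I would carry out a chain-level dual argument. The $q$-allowability of a BM cycle $\tilde\tau \in I^qC^{BM}_{l-j}(X^\ep)$ forces $\dim cl(|\tilde\tau|\setminus\pa X)\cap X_0 \leq (l-j) - l + q_l = -1$ (using $q_l = l-2-p_l$), so $|\tilde\tau|$ lies in $\pa X$ inside some neighborhood $V$ of $x_0$. After subanalytic subdivision, write $\tilde\tau = \tilde\tau_V + \tilde\tau_{\mathrm{out}}$ with $|\tilde\tau_V|\subset V\cap\pa X$ and $|\tilde\tau_{\mathrm{out}}|$ bounded away from $x_0$, so $\tilde\tau_V\in I^{\q}C^{BM}_{l-j}(\pa X^\ep)$. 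The $(l-j-1)$-cycle $\pa\tilde\tau_V = -\pa\tilde\tau_{\mathrm{out}}$ represents a class in $I^{\q}H^{BM}_{l-j-1}(\pa X^\ep)$, a group that Lemma~\ref{lem_local_exact_sequence} on $\pa X^\ep$ combined with the cone vanishings $I^{\q}H_{l-j}(\pa X^\ep) = I^{\q}H_{l-j-1}(\pa X^\ep) = 0$ shows is zero; this produces $\gamma\in I^{\q}C^{BM}_{l-j}(\pa X^\ep)$ with $\pa\gamma=\pa\tilde\tau_V$. Setting $\alpha := (\tilde\tau_V-\gamma)|_{\pa X^\ep\setminus x_0}$, the decomposition $[\tilde\tau] = \tilde i_*[\tilde\tau_V-\gamma] + [\tilde\tau_{\mathrm{out}}+\gamma]$ in $I^qH^{BM}_{l-j}(X^\ep)$ gives $b'[\tilde\tau] = i_*[\alpha] + b'[\tilde\tau_{\mathrm{out}}+\gamma]$, and it remains to verify $b'[\tilde\tau_{\mathrm{out}}+\gamma]=0$. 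This reduces via the LES of pairs to showing that the restriction $\bar b': I^qH^{BM}_{l-j}(X^\ep;\pa X^\ep) \to I^qH^{BM}_{l-j}(X^\ep\setminus x_0;\pa X^\ep\setminus x_0)$ vanishes on the image of $\pi$, which follows from the relative analogue of Lemma~\ref{lem_local_exact_sequence} combined with the relative cone vanishings $I^q H_{l-j}(X^\ep;\pa X^\ep) = I^q H_{l-j-1}(X^\ep;\pa X^\ep) = 0$ (both from Lemma~\ref{lem_calcul local}(3) with $q_3=0$ and $q_l=l-2-p_l$). This yields $\mathrm{im}\, b' \subseteq \mathrm{im}\, i_*$; the reverse inclusion is obtained by running the construction backwards, extending any $\alpha$, up to a boundary on $\pa X^\ep\setminus x_0$, to a BM $\q$-cycle on $\pa X^\ep$ (using the same vanishing) and pushing forward via $\tilde i_*$.

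The main technical obstacle is the subdivision and bookkeeping in (2). Unlike in (1), one cannot simply invoke a ``localize at $x_0$'' long exact sequence for BM intersection homology, because the costalk of the intersection complex at $x_0$ is nonzero in the critical degree, and in particular $\tilde i_*: I^{\q}H^{BM}_{l-j}(\pa X^\ep)\to I^qH^{BM}_{l-j}(X^\ep)$ is not in general surjective. The separation of $\tilde\tau$ into its $\pa X$-part and its bulk part, and the control of the ``excess'' chain $\tilde\tau_{\mathrm{out}}+\gamma$, must therefore be engineered by hand using subanalytic subdivision, with the various cone vanishings derived from Lemma~\ref{lem_local_exact_sequence} standing in for the missing abstract machinery.
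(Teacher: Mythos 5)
Your treatment of part (1) is correct and takes a genuinely different route from the paper's. The paper argues at the chain level (coning a cycle of $\ker \pa$ off to $x_0$ to get $\ker\pa\subset\ker b$, then splitting $\pa\tau=c+d$ for the reverse inclusion), whereas you chase the naturality square of the connecting homomorphisms of the two pairs. The two inputs you need do check out: $I^pH_j(X^\ep)=0$ by Lemma \ref{lem_calcul local}(2), since $p_l=l-j-1$ falls in the vanishing case, so $\pa'$ is injective; and the cone formula on the $(l-1)$-pseudomanifold $\pa X^\ep$ with perversity $\p$ gives that $i_*$ is an isomorphism in degree $j-1=l-p_l-2<l-1-p_l$. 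This is a clean alternative to the paper's argument.

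Part (2) has a genuine gap. The vanishing $I^qH_{l-j-1}(X^\ep;\pa X^\ep)=0$ that you invoke is false: in Lemma \ref{lem_calcul local}(3) with $k=l-j-1$ the threshold reads $q_l\le l-k-2=j-1$, which holds \emph{with equality} since $q_l=j-1$, so this degree sits in the isomorphism case and the group equals $I^qH_{l-j-1}(X^\ep\setminus x_0;\pa X^\ep\setminus x_0)$ --- already nonzero for the double pinched torus of the paper's example, where it is $I^0H_1$ of an annulus rel its boundary. Since this vanishing is what is meant to force $b'[\tilde\tau_{\mathrm{out}}+\gamma]=0$, that step collapses; and the identity $b'[\tilde\tau_{\mathrm{out}}+\gamma]=0$ cannot hold for an arbitrary $\gamma$ with $\pa\gamma=\pa\tilde\tau_V$ in any case (take $\tilde\tau$ supported in $\pa X^\ep$ and $\gamma=\tilde\tau_V$: the claim would then say that every boundary cycle dies in $I^qH^{BM}_{l-j}(X^\ep\setminus x_0)$, contradicting the very statement $\mathrm{Im}\, b'=\mathrm{Im}\, i_*$ you are proving). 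The deeper problem is your closing remark that $\tilde i_*:I^{\q}H^{BM}_{l-j}(\pa X^\ep)\to I^qH^{BM}_{l-j}(X^\ep)$ ``is not in general surjective'': the paper's proof of (2) consists precisely in showing that this map \emph{is} onto. The dimension count you open with ($\dim cl(|\tilde\tau|\setminus\pa X)\cap X_0\le -1$) confines the support to $\pa X$ near $x_0$, and a radial push of the remaining part of the chain toward the bounding sphere --- harmless for Borel--Moore classes on the open set $X^\ep$ --- yields a representative lying entirely in $\pa X^\ep$. With that surjectivity and the isomorphism $I^{\q}H^{BM}_{l-j}(\pa X^\ep)\simeq I^{\q}H^{BM}_{l-j}(\pa X^\ep\setminus x_0)$, both inclusions between $\mathrm{Im}\, b'$ and $\mathrm{Im}\, i_*$ follow from a single commutative square, with no decomposition of $\tilde\tau$ and no auxiliary chain $\gamma$.
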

\begin{proof}
{\it Proof of $(1)$.} Consider $\sigma \in \ker \pa $. Oberve that $(j+1)-l+p_l= (j+1)-(l-1)+\p_{l-1} =0$. Therefore, a $p$ allowable $(j+1)$-chain may contain the point $x_0$ (but not at a boundary point). Let $\tau$ be the chain obtained by retracting by deformation $\sigma$ onto $x_0$.   Then,   $\pa \tau=\sigma$ (since $\sigma \in \ker \pa$) meaning that $\sigma \in \ker b$.  Thus, $\ker \pa \subset \ker b$. Let us show the reversed inclusion.

Take now  $\sigma \in \ker b$ and let $\tau \in I^p C_{j+1}(X^\ep;\pa X^\ep)$ be such that $\pa \tau =\sigma$ in $I^p H_j(X^\ep;\pa X^\ep)$. As $|\pa \tau|$ is $(j;p)$-allowable we have $$\dim |\pa \tau|\cap X_0\leq j-(l-1)+\p _{l-1}= -1.$$
Therefore, $|\pa \tau|$ cannot contain $x_0$.
Let $c \in I^p C_j (\pa X\setminus x_0)$ be the chain constituted by the simplices of $\pa \tau$ that lie in
$\pa X$. For a suitable representative $d$ of the class $\sigma$ we have $\pa \tau= c+d$ (as $\sigma$ is a relative chain, we may drop all the simplices of $\sigma$ that lie in $\pa X^\ep$ without changing the class).   This entails that $\pa c=-\pa d$ and, since $c \in I^p C_j (\pa X^\ep\setminus x_0)$,  that $\pa \sigma $ is zero in  $I^p C_{j-1} (\pa X\setminus x_0)$, as required.

{\it Proof of $(2)$.} If $p_l=l-j-1$ then $q_l=j-1$. We claim that the map $I^q H_{l-j} ^{BM} (\pa X^\ep )\overset{\theta}{\to}  I^q H_{l-j}^{BM} ( X^\ep ) $, induced by inclusion, is onto. Indeed, if $\sigma \in  I^q H_{l-j}^{BM} ( X^\ep ) $ then $$\dim cl(|\sigma|\setminus \pa X)\cap X_0\leq (l-j)-l+q_l= -1.$$
Therefore, there is a small neighborhood of $x_0$ in $X^\ep$ such that:
$$U\cap |\tau| \subset \pa X^\ep.$$
The retraction by deformation of  the complement of this neighborhood onto the link provides a $q$-allowable Borel-Moore chain.  Substracting the boundary of this chain provides  a representative of the class $\sigma$ which lies in $\pa X^\ep$. This shows that $\theta$ is onto, as claimed.

 Observe that, since $\pa X^\ep$ is a pseudomanifold, the map $I^q H_{l-j} ^{BM} (\pa X^\ep)\to
        I^q H_{l-j}^{BM} ( \pa X^\ep \setminus x_0)$, induced by inclusion, is  an isomorphism (see \cite{gm2}, as  $q_l=j-1$ we have $l-j=l-1-\check{q}_{l-1}$).   Now, the lemma follows from the commutative diagram below:
\begin{center}
     \begin{picture}(-210,0)
\put(-120,3){$\theta$}
\put(-120,-47){$i_*$}
\put(-180,-25) {$\simeq$}
\put(-65,-25){$b'$}
      \put(-230,-3){$I^q H_{l-j}^{BM} (\pa X^\ep )$}
        \put(-90,-3){$I^q H_{l-j} ^{BM}( X^\ep)$}
      \put(-230,-52){$I^q H_{l-j} ^{BM} (\pa X^\ep \setminus x_0)$}
          \put(-90,-52){$I^q H_{l-j}^{BM} ( X^\ep \setminus x_0 )$}
      \put(-140,-50){\vector(3,0){40}}
      \put(-160,0){\vector(3,0){60}}
  \put(-185,-10){\vector(0,-1){30}}
      \put(-70,-10){\vector(0,-1){30}}
     \end{picture}
    \end{center}
\vskip 2cm
\end{proof}

\section{Intersection pairings on pseudomanifolds.} In \cite{gm1} the authors define an intersection pairing on stratified pseudomanifolds, which is dual to the cup product up to some isomorphisms induced by excision. Let us recall their construction and then see how it fits with our setting.

\subsection{Pairings on pseudomanifolds} Let $X$ be an oriented $l$-dimensional subanalytic  stratified pseudomanifold (without boundary).

\begin{dfn}\label{dfn_dim_trans}
 Let $p$, $q$ and $r$  be three perversities with $p+q=r$. We say that $C \in I^p C_i (X)$ and  $D \in I^q C_j (X)$ are {\bf dimensionally transverse} if $|C| \cap |D|$ is $(i+j-l;r)$ allowable. Denote it by $C \pitchfork D$.
\end{dfn}

Given two dimensionally transverse chains $C$ and $D$, the authors define in \cite{gm1} an intersection pairing as follows. Let $J:=|\pa C|\cup |\pa D|\cup X_{sing}$. Let $\overline{C} \in  H_i(|C|, |\pa C|)$ and $ \overline{D}\in  H_j(|D|, |\pa D|)$ be the classes
determined by $C$ and $D$. Define $C \cap D$ to be the chain determined by
the image of $(\overline{C}, \overline{D})$ under the following sequence of homomorphisms:

\begin{center}
     \begin{picture}(90,210)
      \put(-10,200){$ H_i(|C|; |\pa C|) \times H_j(|D|; |\pa D|) $}
      \put(50,195){\vector(0,-1){15}}
   \put(-10,170){$ H_i(|C|; | C|\cap J) \times H_j(|D|; | D|\cap J) $}
 \put(50,165){\vector(0,-1){25}}
 \put(30,150){$\simeq$}
 \put(60,150){(excision)}

 \put(-10,125){$ H_i(|C|\cup J;  J) \times H_j(|D|\cup J;  J) $}

 \put(50,120){\vector(0,-1){25}}
 \put(60,105){$\cap [X]\times \cap [X]$}
 \put(130,105){(duality)}
 \put(30,105){$\simeq$}

 \put(-60,85){$ H^{l-i}(X \setminus  J; X \setminus   (|C|\cup J)) \times H^{l-j}( X\setminus  J;X \setminus (|D|\cup J)) $}

 \put(50,80){\vector(0,-1){25}}
 \put(60,65){(cup product)}

 \put(-10,45){$ H^{2l-i-j}(X \setminus  J; X \setminus   ((|C|\cap |D|)\cup J)) $}

 \put(50,40){\vector(0,-1){25}}
 \put(60,25){$\cap [X]$}
 \put(130,25){(duality)}
 \put(30,25){$\simeq$}

  \put(-10,0){$ H_{i+j-l}((|C|\cap |D|)\cup J;  J) $}
 \put(50,-30){\vector(0,1){25}}
 \put(30,-20){$\simeq$}
 \put(60,-20){(excision)}

 \put(-10,-40){$ H_{i+j-l}(|C|\cap |D|; |C|\cap |D|\cap J) $}
  \put(50,-70){\vector(0,1){25}}
 \put(30,-60){$\simeq$}

\put(-10,-80){$ H_{i+j-l}(|C|\cap |D|;  (|\pa C|\cap |D|)\cup (|C|\cap |\pa D|)) $}
        \end{picture}
    \end{center}
\vskip 4cm

The last arrow is an isomorphism since the third term in the exact sequence of these pairs is isomorphic (by excision) to $ H_{i+j-l-1}(|C|\cap |D|\cap X_{sing};  ((|\pa C|\cap |D|)\cup (|C|\cap |\pa D|))\cap X_{sing})$ which is zero thanks to the allowability assumptions which imply that \begin{equation}\label{eq_last_arrow}
\dim |C|\cap |D|\cap X_{sing}\leq i+j-l-2.
\end{equation}

 The main property of this intersection product is that if $C\pitchfork D$, $C\pitchfork \pa D$ and $\pa C \pitchfork D$ we have:
\begin{equation}\label{eq_bord_pairing}
\pa (C \cap D) = \pa C  \cap D+ (- 1)^{l-i} C \cap \pa D.
\end{equation}
in $H_{i+j-l-1}((|\pa C|\cap |D|)\cup (|C|\cap |\pa D|) $.
This formula makes the pairings between the allowable cycles independent of the choice of the representatives of the classes, giving rise to a pairing between the homology groups \cite{gm1}.

\subsection{Intersection pairings on $\pa$-pseudomanifolds.}
Let now $X$ be an oriented stratified $\pa$-pseudomanifold and consider again three perversities $p$, $q$ and $r$ such that $p+q=r$. Recall that we defined allowable chains $C$ by requiring an allowability condition for $|C|\cap \pa X$ and  $cl(|C|\setminus \pa X)$. Hence,  it is natural to extend definition \ref{dfn_dim_trans} to $\pa$-pseudomanifolds by setting:

\begin{dfn}
 Two chains $C \in I^p C_i(X)$ and $D \in I^q C_j(X)$ {\bf are dimensionally transverse} if  for $2\leq m\leq l-1$: \begin{equation}\label{eq_allow_avec_bord}
\dim cl(|C|\cap |D|\cap  X_{\pa,reg})\cap X_{l-1-m}'\leq (i+j-l)-m+\check{r}_m .
\end{equation} and for $2\leq m\leq l$: \begin{equation}\label{eq_allow_avec_bord}
\dim cl(|C|\cap |D|\setminus  \pa X)\cap X_{l-m}\leq (i+j-l)-m+r_m .
\end{equation}
\end{dfn}

\bigskip

We wish to follow the same process as in \cite{gm1} to define  intersection pairings on $\pa$-pseudomanifolds. The only thing that we have to show is that the last arrow of the above diagram is an isomorphism.

 The problem is that on $\pa$-pseudomanifolds, if $C\in I^p C_i(X)$ and  $D \in I^q C_j (X)$ with $C\pitchfork D$, inequality  (\ref{eq_last_arrow}) may fail.
Nevertheless, it is possible to show the following lemma:
\begin{lem}If   $C\in I^p C_i(X)$ and  $D \in I^q C_j (X)$ are dimensionally transverse then  \begin{equation}\label{eq_lem_pairing}\dim cl(|C|\cap |D| \setminus X_{l-2})\cap X_{l-2} \leq i+j-l-2 .\end{equation}
\end{lem}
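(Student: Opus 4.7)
The strategy is to decompose $|C|\cap|D|\setminus X_{l-2}$ into its boundary part (in $X_{\pa,reg}$) and its interior part (away from $\pa X$), and to bound each piece using one of the two allowability conditions in the definition of dimensional transversality.

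The first step is to establish the decomposition, which rests on the inclusion $\pa X\setminus X_{l-2}\subset X_{\pa,reg}$. Since $\pa X\subset X_{sing}\subset X_{l-1}$, we have $\pa X\setminus X_{l-2}\subset X_{l-1}\setminus X_{l-2}$. The strata composing this set are $(l-1)$-dimensional and contained in $\pa X$ (by compatibility of the stratification with $\pa X$); they cannot be contained in $\pa X\setminus X_{\pa,reg}$ because the latter has dimension strictly less than $l-2$ by the definition of a $\pa$-pseudomanifold. Consequently
\[
|C|\cap|D|\setminus X_{l-2}\subset (|C|\cap|D|\cap X_{\pa,reg})\cup (|C|\cap|D|\setminus\pa X),
\]
and intersecting closures with $X_{l-2}$ yields
\[
cl(|C|\cap|D|\setminus X_{l-2})\cap X_{l-2}\subset A_1\cup A_2,
\]
where $A_1:=cl(|C|\cap|D|\cap X_{\pa,reg})\cap X_{l-2}$ and $A_2:=cl(|C|\cap|D|\setminus\pa X)\cap X_{l-2}$.

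Next I would bound the two pieces by taking $m=2$ in the allowability conditions. For $A_2$, the second inequality directly gives $\dim A_2\leq (i+j-l)-2+r_2=i+j-l-2$, since $r_2=0$. For $A_1$, note that $X_{\pa,reg}\subset\pa X$ and $\pa X$ is closed, so $A_1\subset X_{l-2}\cap\pa X=X'_{l-2}$. Because $\pa X$ is itself a stratified pseudomanifold of dimension $l-1$, its filtration satisfies $X'_{l-2}=X'_{l-3}$, hence $A_1=cl(|C|\cap|D|\cap X_{\pa,reg})\cap X'_{l-3}$; the first inequality with $m=2$ then gives $\dim A_1\leq (i+j-l)-2+\check{r}_2=i+j-l-2$, using $\check{r}_2=r_3-r_3=0$.

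Taking the union of the two bounds yields the claim. The only real subtlety is the first step: the inclusion $\pa X\setminus X_{l-2}\subset X_{\pa,reg}$, which is exactly the geometric content of the defining assumption $\dim(\pa X\setminus X_{\pa,reg})<l-2$ of a $\pa$-pseudomanifold and is what makes the argument work here even though the usual bound $\dim|C|\cap|D|\cap X_{sing}\leq i+j-l-2$ from \cite{gm1} is no longer available in this setting.
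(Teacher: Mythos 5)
Your proof is correct and follows essentially the same route as the paper's, which simply splits $|C|\cap |D|$ into its part in $\pa X$ and its part off $\pa X$ and invokes the two allowability conditions with $m=2$; you have merely filled in the details that the paper leaves implicit. The only place where your justification is slightly abbreviated is the inclusion $\pa X\setminus X_{l-2}\subset X_{\pa,reg}$: the dimension count alone only shows that each $(l-1)$-dimensional stratum of $\pa X$ \emph{meets} $X_{\pa,reg}$, and one should also invoke local topological triviality (constancy of the link along a connected stratum) to conclude that such a stratum is entirely contained in $X_{\pa,reg}$.
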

\begin{proof} Thanks to the allowability conditions, the desired inequality clearly holds if  $|C|\cap |D|$ is replaced by  $|C|\cap |D| \cap  \pa X$ or $|C|\cap |D|\setminus \pa X$ and thus it holds for $|C|\cap |D|$ itself as well.
\end{proof}

Let now $\xt$ be the double of $X$, i. e. the stratified pseudomanifold obtained by attaching two copies of $X$ along  $\pa X$.  As a consequence of the above  lemma, if $C \in I^p C_i(X)$ and $D \in I^q C_j(X)$ then
 the last arrow of the above diagram written for the pseudomanifold $\xt$  is an isomorphism since by  (\ref{eq_lem_pairing}), for each $a=0,1$, the boundary operator from $H_{i+j-l-a}(|C| \cap  |D| ;|C|\cap | D|\cap J)$ to $$H_{i+j-l-a-1}(|C| \cap  |D|\cap J;(|\pa C|\cap |D|)\cup (|C|\cap |\pa D|)\cap \xt_{sing})$$
 is then necessarily identically zero.  Denote by $\cap_{\xt}$ the resulting pairing.

\subsection*{Definition of the pairing.} There is a natural map $C_i(X)\to C_i(\xt)$, $C \to \tilde{C}$, assigning to every chain its double, mapping relative cycles of $(X;\pa X)$ into cycles of $\xt$ . Let $p$, $q$ and $r$ be three perversities with $p+q=r$. Given two chains $C \in I^p C_i  (X;\pa X)$ and $D \in I^q C_j  (X)$ such that $C\pitchfork  D$ we set:
$$C\cap D:=\tilde{C} \cap_{\xt} D. $$

\begin{lem}\label{lem_formule_pairing}
Let $C \in I^p C_i(X;\pa X)$  and $D \in I^q C_j  (X)$ with $C\pitchfork D$, $\pa C \pitchfork  D$ and $C\pitchfork \pa D$. We have:
\begin{equation}\label{eq_formule_pairing}
\pa (C\cap D)= \pa C \cap D+(-1)^{l-i} C\cap \pa D.
\end{equation}
\end{lem}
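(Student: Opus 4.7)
The plan is to reduce the identity to the corresponding formula (\ref{eq_bord_pairing}) on the pseudomanifold $\xt$, which is already available from \cite{gm1}, and then translate back via the definition $C\cap D:=\tilde{C}\cap_{\xt}D$. The calculation should run
$$\pa(C\cap D)=\pa(\tilde{C}\cap_{\xt}D)=\pa\tilde{C}\cap_{\xt}D+(-1)^{l-i}\tilde{C}\cap_{\xt}\pa D,$$
and then I will identify the two right-hand summands with $\pa C\cap D$ and $C\cap \pa D$ respectively.

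First I would verify that the dimensional transversality hypotheses on $X$ imply the corresponding transversality on $\xt$ for the pairs $(\tilde{C},D)$, $(\pa\tilde{C},D)$ and $(\tilde{C},\pa D)$, so that (\ref{eq_bord_pairing}) applies. Since $D$ lives in one copy of $X\subset\xt$ and the mirror copy of $C$ only meets $D$ along $\pa X$, one has $|\tilde{C}|\cap|D|=|C|\cap|D|$, and likewise for the boundaries. For the strata of $\xt$ lying away from $\pa X$ the condition is exactly (\ref{eq_allow_avec_bord}) in the interior form, while along the strata of $\xt$ inherited from $\pa X$ the condition is precisely (\ref{eq_allow_avec_bord}) in the boundary form; the very definition of $\check{p}$, $\check{q}$, $\check{r}$ is what makes these two statements compatible.

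Second, I would identify $\pa\tilde{C}$ with the double of the relative boundary of $C$. Decompose $\pa C=\alpha+\beta$, where $\beta$ is the part of $\pa C$ supported on $\pa X$ and $\alpha$ is the remainder. In the doubling construction, the mirror copy of $\beta$ coincides with $\beta$ as a set but carries the opposite orientation, so the two contributions cancel and $\pa\tilde{C}=\tilde{\alpha}$. Since $\alpha$ is a representative of the relative boundary of $C$ in $I^p C_{i-1}(X;\pa X)$, this means $\pa\tilde{C}=\widetilde{\pa C}$, so by the very definition of the pairing on the $\pa$-pseudomanifold we obtain $\pa\tilde{C}\cap_{\xt}D=\pa C\cap D$. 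Likewise $\tilde{C}\cap_{\xt}\pa D=C\cap\pa D$ and $\pa(\tilde{C}\cap_{\xt}D)=\pa(C\cap D)$, from which (\ref{eq_formule_pairing}) follows immediately.

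The main obstacle is the first step: checking that doubling preserves allowability and dimensional transversality requires a careful match between the stratification of $\xt$ near $\pa X$ and the boundary perversity $\check{p}$, and is where the definition $\check{p}_j=p_{j+1}-p_3$ pays off. The orientation bookkeeping in the second step is routine but must be done consistently with the orientation convention fixed on the double; apart from this, once $\pa\tilde{C}=\widetilde{\pa C}$ is established, the conclusion is purely formal.
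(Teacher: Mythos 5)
Your argument is exactly the paper's: the paper deduces the formula from (\ref{eq_bord_pairing}) applied on the double $\xt$ together with the identity $\pa\tilde{C}=\widetilde{\pa C}$, which is precisely the reduction you carry out (your verification of transversality on $\xt$ and the cancellation of the boundary pieces along $\pa X$ are just a more detailed unpacking of the paper's one-line proof). The proposal is correct and takes essentially the same route.
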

\begin{proof}
This formula is of course deduced from (\ref{eq_bord_pairing}) and the fact that $\pa \tilde{C}=\widetilde{\pa C}$.
\end{proof}



  Obviously, the pairing is still well defined if one of the two chains is a Borel-Moore chain since supports of (finite) allowable chains are compact. We conclude:

\begin{pro}
Let $p$ and $q$ be complement perversities. For any $i$, then there is a unique intersection pairing
$$\cap \, :\, I^p H_i (X; \pa X)\times I^q H_{l-i}^{BM} (X)\to I^t H_{0}(X).$$
such that $[\sigma \cap  \tau] = [\sigma] \cap [\tau]$ for every dimensionally transverse pair of cycles.
\end{pro}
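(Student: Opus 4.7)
The strategy is to imitate Goresky and MacPherson's passage from the chain-level pairing to a pairing on homology classes. The chain-level pairing $\cap_{\xt}$ has already been built on dimensionally transverse pairs and satisfies the boundary formula of Lemma \ref{lem_formule_pairing}. It remains to (i) produce dimensionally transverse representatives in each pair of classes (moving lemma), and (ii) verify independence of the choice of such representatives.

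The main obstacle is (i). Given a relative cycle $\sigma \in I^p C_i(X;\pa X)$ and a Borel--Moore cycle $\tau \in I^q C_{l-i}^{BM}(X)$, I would argue that after a sufficiently fine subanalytic subdivision one can apply a small stratum-preserving subanalytic isotopy to the simplices of $\tau$ (or of $\sigma$) so as to enforce the codimension estimates of the definition of $\pitchfork$ on every stratum of $\Sigma$. This reduces, stratum by stratum, to a classical general-position argument, which is available in the subanalytic category. Compared to the pseudomanifold case treated in \cite{gm1}, the novelty is that the transversality condition splits into a part on $cl(|\sigma|\cap|\tau|\setminus\pa X)$ (governed by $r$) and a part on $|\sigma|\cap|\tau|\cap X_{\pa,reg}$ (governed by $\check{r}$), and the two perturbations must be chosen compatibly in a neighborhood of $\pa X$. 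Once this compatibility is organized, the argument is standard.

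Granted the moving lemma, the pairing $[\sigma]\cap[\tau] := [\tilde{\sigma}\cap_{\xt}\tau]$ is well defined. Indeed, if $\sigma_0,\sigma_1$ are two dimensionally transverse representatives of the same relative class, write $\sigma_1-\sigma_0 = \pa\alpha$ modulo $I^{\check{p}}C_i(\pa X)$ for some $\alpha \in I^p C_{i+1}(X;\pa X)$, which after a further application of the moving lemma may be taken to satisfy $\alpha\pitchfork\tau$. Passing to doubles, $\widetilde{\sigma_1}-\widetilde{\sigma_0} = \pa\tilde{\alpha}$ in $\xt$, and since $\pa\tau=0$, Lemma \ref{lem_formule_pairing} applied in $\xt$ yields
\[
\pa(\tilde{\alpha}\cap_{\xt}\tau) \;=\; \pa\tilde{\alpha}\cap_{\xt}\tau \;=\; (\widetilde{\sigma_1}-\widetilde{\sigma_0})\cap_{\xt}\tau,
\]
so the two chain-level pairings are homologous in the target. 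The argument in the second variable is symmetric and uses the same formula with $\pa\tilde{\sigma}=0$. The support of $\tilde{\sigma}\cap_{\xt}\tau$ lies in $X$ since $|\tau|\subset X$, and any $0$-chain is automatically $t$-allowable, so the resulting class indeed lies in $I^t H_0(X)$. Uniqueness follows at once: the moving lemma shows that every pair of classes admits a dimensionally transverse representative, so the pairing is completely determined by the rule $[\sigma\cap\tau] = [\sigma]\cap[\tau]$.
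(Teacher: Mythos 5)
Your proposal is correct and follows essentially the same route as the paper, which simply defers to Goresky--MacPherson's argument with the boundary formula (\ref{eq_bord_pairing}) replaced by (\ref{eq_formule_pairing}); you have merely made explicit the two standard ingredients, namely the subanalytic moving lemma and the independence-of-representatives computation via Lemma \ref{lem_formule_pairing}. One small correction: a $0$-chain is not automatically $t$-allowable (it must avoid $X_{sing}$, since $t_k+0-k=-2$), but the dimensional transversality bound $\dim |C|\cap |D|\cap X_{l-m}\leq 0-m+t_m=-2$ guarantees exactly this for the intersection chain, so your conclusion stands.
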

\begin{proof}
 This may be proved like in \cite{gm1}, replacing  (\ref{eq_bord_pairing})  by (\ref{eq_formule_pairing}).
\end{proof}

More generally, if $W$ denotes a subanalytic open subset of $X$, we have an  intersection  pairing:
$$I^p H_i (X;W\cup \pa X)\times I^q H_{l-i}^{BM} (X;W)\to I^l H_{0}(X).$$

\begin{rem}
We have derived our pairing from the one of \cite{gm1} by considering the double of $X$. One could also have considered relative forms of Lefschetz-Poincar\'e duality in the above diagram. It seems to lead  to the same pairing. The advantage of the method we used is that we avoided  reconsidering  the sequence of homomorphisms.
\end{rem}

\subsection{The Lefschetz duality morphism.}
If $X_{reg}$ is connected, then $I^l H_{0}(X)=\R$ and  this pairing gives rise to a homomorphism:
$$\psi_X^i: I^p H_i(X;\pa X) \to I^q H_{l-i} ^{BM} (X) ^*,$$
defined in the obvious way ($*$ denotes the dual functor i. e. $E^*= Hom (E;\R)$). We shall show that $\psi_X^i$ is an isomorphism for any $i$.

More generally, for any subanalytic open set $W \subset X$,  we have a map:
$$\psi_{X,W}^i: I^p H_i (X;W\cup \pa X) \to I^q H_{l-i}^{BM} (X;W) ^*.$$

\section{Lefschetz duality}\label{sect_lef}Let $X$ be a subanalytic oriented stratified $\pa$-pseudomanifold.
\begin{thm}\label{thm_lefschetz}
For any perversities $p$ and $q$ with $p+q=t$, the mappings $\psi_X^j$ induce isomorphisms:  $$I^{q}  H_{l-j}^{BM} (X)\simeq I^p H_{j} (X;\pa X).$$
In particular, if $X$ is compact:
 $$I^{q}  H_{l-j} (X)\simeq I^p H_{j} (X;\pa X).$$
\end{thm}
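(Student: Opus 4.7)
The plan is to prove the stronger statement that $\psi_{X,W}^j$ is an isomorphism for every subanalytic open subset $W\subset X$, and then specialize to $W=\emptyset$. The proof proceeds by reducing to the local calculations already carried out in the previous section, via a Mayer--Vietoris bootstrap on a suitable family of subanalytic open subsets.

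\textbf{Step 1 (Mayer--Vietoris ladder).} For two subanalytic open subsets $W_1,W_2\subset X$, subdivision of subanalytic simplices (which preserves $p$-allowability, since allowability is a property of supports) yields Mayer--Vietoris long exact sequences
\[
\cdots\to I^p H_j\bigl(X;(W_1\cap W_2)\cup\pa X\bigr)\to I^p H_j(X;W_1\cup\pa X)\oplus I^p H_j(X;W_2\cup\pa X)\to\cdots
\]
and a dual one for $I^q H^{BM}_{l-j}(X;\cdot)^*$. The intersection pairing defined in the previous section is natural with respect to inclusions of open sets, and the identity $\pa(C\cap D)=\pa C\cap D+(-1)^{l-i}C\cap\pa D$ of Lemma \ref{lem_formule_pairing} shows that $\psi$ commutes with the connecting maps. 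One thus obtains a commutative ladder, to which the five-lemma applies: whenever $\psi^j_{X,W_1}$, $\psi^j_{X,W_2}$ and $\psi^j_{X,W_1\cap W_2}$ are isomorphisms for every $j$, so is $\psi^j_{X,W_1\cup W_2}$.

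\textbf{Step 2 (Local case).} By excision, if $W$ is the complement in $X$ of $X^\ep=B(x_0;\ep)\cap X$ for $\ep$ small, then $I^p H_j(X;W\cup\pa X)\simeq I^p H_j(X^\ep;\pa X^\ep)$ and $I^q H^{BM}_{l-j}(X;W)\simeq I^q H^{BM}_{l-j}(X^\ep)$. When $x_0\in X_{reg}$ or $x_0\in X_{\pa,reg}$ the local statement is the classical Lefschetz duality. When $x_0\in X\setminus\pa X$ lies in a stratum of positive codimension but away from $\pa X$, it is the Goresky--MacPherson local Poincar\'e duality. The genuinely new situation is $x_0\in\pa X\setminus X_{\pa,reg}$; by using the local conic structure and Lemma \ref{lem_produits} (together with Remark \ref{rem_produit}) one may reduce to the case $x_0\in X_0\cap\pa X$. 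In that case the two groups are computed in Lemma \ref{lem_calcul local} and Remark \ref{rem_calcul}, and because $p$ and $q$ are complementary the degree conditions align on the two sides: either both groups vanish, or both are isomorphic to corresponding groups on the link, where induction on $\dim X$ (via Lemma \ref{lem_produits}) applies. In the remaining critical degree $j=l-p_l-1$ with $p_3=1$, Lemma \ref{lem_calcul_image} provides exactly the matching identification of $\ker b$ on the one hand and $\operatorname{Im} b'$ on the other, and one checks that the pairing sends a relative fundamental class onto the dual of the generator of $I^q H^{BM}_{l-j}(X^\ep\setminus x_0)$.

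\textbf{Step 3 (Induction).} By the triangulability of subanalytic sets, cover $X$ by finitely many open sets $U_1,\dots,U_N$ each of which is either contained in some local model $X^\ep$ of Step 2 or is obtained from previously handled open sets by intersecting with a further $X^\ep$. Starting from the trivial case and repeatedly applying Step 1, one concludes by induction on $N$ that $\psi_{X,W}^j$ is an isomorphism for $W$ equal to the complement of the whole stratification, hence for $W=\emptyset$, giving the theorem. The compact case follows since $I^q H_{l-j}^{BM}(X)=I^q H_{l-j}(X)$ when $X$ is compact.

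\textbf{Expected difficulty.} The most delicate points are: first, checking that the pairing-induced maps really fit into a commutative ladder between the two Mayer--Vietoris sequences, which ultimately rests on Lemma \ref{lem_formule_pairing} and the fact that the doubling map $C\mapsto\widetilde{C}$ commutes with boundary; second, the matching of kernels and images in the critical degree $j=l-p_l-1$ when $p_3=1$, where Lemma \ref{lem_calcul_image} is needed to see that the dualities on $X$ and on $\pa X$ are compatible through the exact sequence \eqref{eq_loc_exact_sequence}. The rest of the argument is a standard sheaf-free bootstrap.
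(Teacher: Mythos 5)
Your overall architecture (local computation at points of $\pa X\cap X_0$ via Lemmas \ref{lem_calcul local}, \ref{lem_calcul_image} and Remark \ref{rem_calcul}, then a five-lemma glue) matches the paper, and your Step 2 is essentially the paper's base case $(\textrm{A}_l)$, including the correct identification of the delicate degree $j=l-p_l-1$ when $p_3=1$. The gap is in Steps 1 and 3. Your Mayer--Vietoris ladder requires a long exact sequence for the \emph{Borel--Moore} groups $I^qH^{BM}_{l-j}(X;\cdot)$ relative to arbitrary subanalytic open subsets, and such a sequence is not available in general: the paper's Lemma \ref{lem_exact_bm} only produces the Borel--Moore exact sequence of a pair $(X;W)$ under the hypothesis that $\partial W$ has a stratified collared neighborhood in $cl(W)\cap X$. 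Since the whole point of the paper is that $\pa$-pseudomanifolds need not have collars, you cannot take $W_1,W_2$ to be arbitrary members of a cover by balls; you must exhibit open sets whose frontiers are collared. This is exactly what the paper's construction via Hardt's theorem achieves: the slabs $W_i=Y_{(y_i+\ep/2;y_i+\ep)}\cup Y_{(y_i-\ep;y_i-\ep/2)}$ are products over a generic fiber by topological triviality, hence their frontiers are collared and Lemma \ref{lem_exact_bm} applies; moreover the pieces $W_i$, $Z_i$ are themselves products (handled by Lemma \ref{lem_produits}, Remark \ref{rem_produit} and the induction on $\dim X$), while $T_i$ is handled by the next stage $(\textrm{A}_{k+1})$ of a downward induction on the number of constrained coordinates.

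Two further points in Step 3 need repair even granting Step 1. First, a finite cover of $X$ by sets contained in balls $B(x_0;\ep)$ need not exist, since $X$ is not assumed compact (the theorem is stated for Borel--Moore homology precisely to cover that case); the coordinate-slab induction avoids this because Hardt's theorem gives finitely many trivializing intervals even for unbounded $X$. Second, intersections of two local models $X^{\ep}$ are neither local models nor products, so your bootstrap does not close on itself: you would need a ``good cover'' whose multiple intersections are again of a type already handled, which is again what the rectangular sets $\{x\in X:\forall\,i\leq m,\ |x_i-a_i|<\ep\}$ and the excision $I^pH_j(Y;W)=\oplus_iI^pH_j(T_i;W_i)\oplus_iI^pH_j(Z_i;W_i)$ are engineered to provide. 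I would suggest replacing Steps 1 and 3 by the paper's downward induction on $m$ over sets of this rectangular type, keeping your Step 2 as the base case.
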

\begin{proof}
We prove the theorem by induction on $l=\dim X$. If $l=0$ the result is clear. Let $X \subset \R^n$ be a $\partial$-pseudomanifold and assume that the theorem holds true for any stratified  $\partial$-pseudomanifold.

Observe  that if $(X;\partial X)$ is a product $(Y\times (0;1);\partial Y \times (0;1))$ (with $(Y;\partial Y)$ stratified $\partial$-pseudomanifold  of $\R^{n-1}$) and is equipped with a product stratification then the result immediately follows from the induction hypothesis, together with  Lemma \ref{lem_produits} and Remark \ref{rem_produit}.


In order to perform the induction step, we establish the following facts by downward induction on $m\leq l$:

 {\bf $(\textrm{A}_m)$.} The mappings $\psi_Y^j$ are isomorphisms for any set $Y$ of type $\{x \in X: \forall\, i\leq  m,\; |x_{i}-a_i| <\ep \}$, with $a_1,\dots,a_m$ real numbers, for $\ep>0$ small enough.

The  theorem follows from {\bf $(\textrm{A}_0)$}. We first prove  {\bf $(\textrm{A}_l)$}. We have to show that $X^\ep := B(x_0;\ep)\cap X$ satisfies Lefschetz duality.  If $x_0 \notin \partial X$, this follows from \cite{gm2} (see also \cite{k}).

It follows from the local conic structure of subanalytic sets that $X^\ep\setminus x_0$ is subanalytically homeomorphic to the product of the link by an interval, for which we already saw that Lefschetz duality holds.

 Hence, if $p_3=0$ and $q_3=1$ then,  by Lemmas \ref{lem_produits}  and \ref{lem_calcul local},  for $\ep$ small enough, Lefschetz duality holds for  $I^p H_j (X^\ep)$.

On the other hand, if $p_3=1$,  Lemmas \ref{lem_produits}  and \ref{lem_calcul local} also establish that $\psi_{X^\ep}^j$ is an isomorphism for $j\neq l-p_l-1$.  We are going to prove that  $\psi_{X^\ep}^{l-p_l-1}$ is an isomorphism as well.
  This is more delicate since in this case the inclusion of $X\setminus x_0$ does not induce  isomormophisms. Indeed, in this case Lemma \ref{lem_calcul_image} says that the only cycles which appear are those of the boundary.  Thus, we shall deduce the duality from the one of the boundary.

For simplicity let $j:=l-p_l-1$. We have the following commutative diagram:
\begin{center}
     \begin{picture}(-210,0)
\put(-120,3){$\pa $} \put(-120,-47){$i^*$}
\put(-200,-25){$\psi_{ X^\ep \setminus x_0}^{j}$}\put(-65,-25){$\psi_{\pa X^\ep \setminus x_0}^{j-1}$}
      \put(-280,-3){$I^p H_{j} (X^\ep \setminus x_0;\pa X^\ep \setminus x_0)$}
        \put(-90,-3){$I^p H_{j-1} ( \pa X^\ep \setminus x_0)$}
      \put(-250,-52){$I^q H_{l-j} ^{BM} (X^\ep \setminus x_0)^*$}
          \put(-90,-52){$I^q H_{l-j}^{BM} (\pa X^\ep \setminus x_0)^*$}
      \put(-160,-50){\vector(3,0){60}}
      \put(-160,0){\vector(3,0){60}}
  \put(-205,-10){\vector(0,-1){30}}
      \put(-70,-10){\vector(0,-1){30}}
     \end{picture}
    \end{center}
\vskip 20mm
where $i^*$ is induced by inclusion and $\pa$ by the boundary operator.
Since $\pa X^\ep$ is a pseudomanifold without boundary, $\psi_{\pa X^\ep \setminus x_0}^{j-1}$ is an isomophism and so  \begin{equation}\label{eq_ker}
\psi_{X^\ep \setminus x_0}^{j} (\ker \partial) =\ker i^*.
\end{equation}

Write now the following commutative diagram:
\begin{center}
     \begin{picture}(-210,0)
\put(-120,3){$b$} \put(-120,-47){$b'$}
\put(-160,-25){$\psi_{ X^\ep \setminus x_0}^{j}$}\put(-65,-25){$\psi_{X^\ep }^{j}$}
      \put(-280,-3){$I^p H_{j} (X^\ep \setminus x_0; \pa X^\ep \setminus x_0)$}
        \put(-90,-3){$I^p H_{j} ( X^\ep ;\pa X^\ep)$}
      \put(-250,-52){$I^q H_{l-j} ^{BM} (X^\ep \setminus x_0 )^*$}
          \put(-90,-52){$I^q H_{l-j}^{BM} ( X^\ep )^*$}
      \put(-140,-50){\vector(3,0){40}}
      \put(-140,0){\vector(3,0){40}}
  \put(-165,-10){\vector(0,-1){30}}
      \put(-70,-10){\vector(0,-1){30}}
     \end{picture}
    \end{center}
\vskip 20mm
where $b$ and $b'$ are induced by inclusion.  As  $X^\ep\setminus x_0$ is subanalytically homeomorphic to a product over the link, the first vertical arrow is an isomorphism.  We wish to show that so is the second vertical arrow. Indeed, by Lemma \ref{lem_local} the two above horizontal arrows are onto. Therefore is enough to show that $\psi_{X^\ep\setminus x_0}^j(\ker b )=\ker b'$. But, by Lemma \ref{lem_calcul_image} and (\ref{eq_ker})  we get:
$$\psi_{X^\ep \setminus x_0}^{j} (\ker b) =\psi_{X^\ep \setminus x_0}^{j} (\ker \pa)\overset{(\ref{eq_ker})}{=}\ker i^*=\ker b'.$$

 This yields {\bf $(\textrm{A}_l)$}.

Let $k<l$ and set $\pi_k(x)=x_k$. Let $Y$ be a set like in {\bf $(\textrm{A}_k)$}.  We shall write $Y_a$ for $Y \cap \pi_k ^{-1}(a)$ and $Y_{[a;b]}$ for $Y \cap \pi_k ^{-1}([a;b])$.

 By Hardt's theorem, there exists finitely many real numbers $-\infty =y_0,y_1,\dots,y_s,y_{s+1}=\infty$ such that on $(y_i;y_{i+1})$ the family $Y$ is topologically trivial.  We may assume that this trivialization preserves the strata and $\pa Y$.

Set $T_i:=Y_{(y_i-\ep;y_{i+1}+\ep)}$ and $Z_i:=Y_{(y_i+\frac{\ep}{2};y_{i+1}-\frac{\ep}{2})}$ as well as  $$W_i:=Y_{(y_i+\frac{\ep}{2};y_{i}+\ep)}\cup Y_{(y_i-\ep;y_{i}-\frac{\ep}{2})}$$ with $\ep>0$ small. Finally set $W :=\cup \, W_i$.

 Let us write the exact sequence (\ref{eq_pair_rel}) for the inclusion $(W;W\cap \pa Y) \hookrightarrow (Y;W\cup \pa Y)$:
$$\dots \to I^p H_j (W;W \cap \pa Y)\to I^p  H_j(Y;\pa Y)\to I^p  H_j(Y;W\cup \pa Y)\to \dots .$$
As $\pa W$ has a collared neighborhood (by topological triviality), by Lemma \ref{lem_exact_bm}, a  similar exact sequence holds for the dual groups of Borel-Moore intersection homology of the pair $(Y;W)$.  These two exact sequences constitute a commutative diagram with the mappings $\psi_Y^j$, $\psi_W^j$  and $\psi_{Y,W}^j$.

Therefore, thanks to the five lemma it is enough to show that the maps $\psi_W^j$'s and $\psi_{Y,W}^j$'s induce  isomorphisms on  $I^p H_j (W)$ and $ I^p H_j (Y;W)$.   By Hardt's theorem, $W$ is a product of a generic fiber  (which is a $\partial$-pseudomanifold) by an open interval. As we have established Lefschetz duality for products of pseudomanifolds of $\R^{n-1}$ by  an open interval, it remains to show that it is also true for $ I^p H_j (Y;W)$.

By excision:
$$ I^p H_j (Y;W) =\oplus _{i =1 } ^s  I^p H_j (T_i ;W_i)\oplus_{i=1} ^s I^p H_j (Z_i ; W_i). $$
Thus, it is enough  to  deal separately with $I^p H_j (T_i ;W_i)$ and $I^p H_j (Z_i ; W_i)$. Again, thanks to the exact sequences  of the pairs $(Z_i;W_i)$ and $(T_i;W_i)$ and the five lemma, it is enough to show Lefschetz duality for $W_i $, $T_i $ and $Z_i$. Thanks to topological triviality,  $W_i$ and $Z_i$ may be identified with  a product of the generic fiber by an open interval for which we observed that the result holds true.  For $T_i$, the result follows from {\bf $(\textrm{A}_{k+1})$}.
\end{proof}

\subsection{Some concluding remarks and an example.}
\begin{enumerate}
\item The same inductive argument as in the above proof yields that the groups are finitely generated and independent of the chosen stratification.  Observe also that this Lefschetz duality result generalizes Theorem \ref{thm_poincare_ih}.
\item We may define $$I^p C_j^{BM}(X;\pa X):=\frac{I^p C_j^{BM}(X)}{I^p C_j^{BM}(\pa X)},$$
and denote by $I^p H_j^{BM}(X;\pa X)$ the resulting homology groups.  Then, we have a similar duality result:
$$I^p H_j^{BM}(X;\pa X)\simeq I^q H_{l-j}(X),$$
if $X$ is a subanalytic oriented stratified  $\pa$-pseudomanifold and $p+q=t$. This isomorphism may indeed be deduced from the one of the latter theorem and the five lemma  since we have a sequence for the Borel-Moore intersection  homology of the pair $(X;\pa X)$ which constitutes a commutative diagram with the one of singular intersection homology of the pair $(X;\pa X)$.
\item It seems that the results of this paper could be generalized to stratified  sets $X$ which are not stratified pseudomanifolds but which have a one codimensional stratum which is a stratified  pseudomanifold. However, the statement duality needs to be adapted. Nevertheless, the groups  seem to be an inveriant of $(X;X_{n-1})$. It would be interesting to compare this with the results obtained by Friedman in  \cite{f2}, where the author studied pseudomanifolds with possibly a one codimensional stratum and established theorems  of this type.


\item  Let $X$ be a pseudomanifold. In \cite{vlinfty}, it is proved that the $L^\infty$ cohomlogy of $X_{reg}$ is isomorphic to intersection cohomology  in the maximal perversity. This theorem is still true if $X$ is a $\pa$-pseudomanifold (the Poincar\'e Lemma proved in \cite{vlinfty} does not assume that $X$ is a pseudomanifold). In \cite{vl1}, we prove that the Dirichlet $L^1$ cohomology is always dual to $L^\infty$ cohomology. Therefore, the Lefschetz duality proved in the present paper implies that, if $X$ is a $\pa$-pseudomanifold,  the   Dirichlet $L^1$ cohomology of $X_{reg}$ is isomorphic to intersection cohomology of $(X;\pa X)$ in the zero perversity (compare with \cite{vl1} Corollary $1.6$).
\end{enumerate}


\begin{exa}
Consider the following double pinched torus embedded in $S^3=\R^3 \cup \infty$.

\begin{figure}[ht]
\begin{pspicture}(1,-2)(8,2)
\pscurve[linewidth=1pt](4,1.48)(1.5;0)(4,-1.48)
\pscurve[linewidth=1pt](4,1.48)(6.5;0)(4,-1.48)
\psellipse[linewidth=.8pt](4,0)(3.5,1.5)
\psellipse[linewidth=.8pt](1,0)(0.5,0.15) 
\end{pspicture}
\caption{}
\end{figure}

Consider the $\pa$-pseudomanifold $X$ consituted by this torus together with the connected component of its complement which is not simply connected (the unbounded one on the picture). The boundary of this $\pa$-pseudomanifold is this double pinched torus.

We first examine the intersection homology groups  for the top perversity  near a singular point $x_0$ of the singular torus. Let $X^\ep :=B(x_0;\ep)\cap X$. Then thanks to Lemma \ref{lem_calcul local} we get, $I^t H_2(X^\ep;\pa X^\ep)\simeq I^t H_0(X^\ep;\pa X^\ep)\simeq 0$ and:$$I^t H_1(X^\ep;\pa X^\ep)\simeq \R.$$

A representative of the generator of $I^t H_1(X^\ep;\pa X^\ep)$ is provided by any arc joining the two connected components of the regular locus of the torus. The groups $I^t H_j (X;\pa X)$ are indeed the same.

On the other hand, it is not difficult to show that  $I^0 H_1(X)\simeq I^0 H_3(X)\simeq 0$ and:
$$ I^0 H_2(X)\simeq \R.$$

The generator of $I^0 H_2(X)$ is given by any of the two cycles of the pinched torus.  We see in particular that this class does not have a $0$-allowable representative in $X\setminus \pa X$. The  $0$-allowability condition of chains in $\pa X$ (since $\check{0}=0$) is thus essential to ensure Lefschetz duality.
\end{exa}

\section{Appendix: two exact sequences} For the sake of clarity, we gather in this section a couple of exact sequences, derived in a fairly classical way and  needed in the proof of Lefschetz duality.
\subsection{Intersection homology relative to an open set.} Let $X$ be a subanalytic stratified $\partial$-pseudomanifold and  let $W$ be a subanalytic  open subset of $X$. We may endow this subset with the filtration $X_i \cap W$, where  $X_i$ denotes the given filtration of $X$.

  As a $p$-allowable chain of $W$ is obviously a $p$-allowable subset of $X$, we may set $$I^p H_j (X;W):= \frac{I^p H_j (X)}{I^p H_j (W)}.$$

 The inclusion $(W;W\cap \partial X)\hookrightarrow (X;\partial X)$ induces the following  long exact sequence:
\begin{equation}\label{eq_pair_rel}
\dots  \to I^{p} H_j (X;\partial X)\to I^p H_j (X;W \cup \partial X) \to I^p H_{j-1} (W;W \cap \partial X) \to \dots .
\end{equation}

\subsection{Borel-Moore intersection homology for $\partial$-pseudomanifolds.} 
A similar exact sequence holds with the Borel-Moore homology. It is however somewhat more delicate and we have to assume that $W$ has a collared neighborhood.

Let $X$ be a subanalytic locally closed stratified pseudomanifold.

\begin{lem}\label{lem_isom_bm}
 Let $\hat{X}:=X\cup \{\infty \}$ be the one point compactification of $X$ (we can assume $\hat{X}$ subanalytic).  For $\ep$ small enough: $$I^p H_j ^{BM}(X)\simeq I^p H_j  (\hat{X};B(\infty;\ep)\cap \hat{X})= I^p H_j^{BM}  (\hat{X};B(\infty;\ep)\cap \hat{X}).$$
\end{lem}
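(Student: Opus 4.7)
\emph{The right-hand equality.} Since $\hat X$ is compact and subanalytic, every locally finite subanalytic chain on $\hat X$ has only finitely many simplices, so $I^p C^{BM}_\bullet(\hat X) = I^p C_\bullet(\hat X)$ as chain complexes. Both the ordinary and the Borel--Moore ``relative'' complexes $I^p C_\bullet(\hat X; B(\infty;\ep)\cap \hat X)$ and $I^p C_\bullet^{BM}(\hat X; B(\infty;\ep)\cap \hat X)$ are defined by the single condition that supports be disjoint from $B(\infty;\ep)\cap \hat X$; imposing the same condition on the same underlying complex yields the same subcomplex, and the right-hand equality follows.

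\emph{The main isomorphism.} For $\ep>0$ small enough, the subanalytic local conic structure applies at $\infty \in \hat X$: there is a subanalytic homeomorphism sending $B(\infty;\ep)\cap \hat X$ to the open cone on $L_\ep := S(\infty;\ep)\cap \hat X$ with $\infty$ at the apex, and one may arrange this to be compatible with a stratification of $\hat X$ extending the one on $X$ by placing $\infty$ in a $0$-dimensional stratum. The closed complement $K := \hat X \setminus B(\infty;\ep)$ is a compact subset of $X$, so any $p$-allowable chain on $\hat X$ whose support avoids $B(\infty;\ep)\cap \hat X$ is, tautologically, a finite (hence locally finite) $p$-allowable chain on $X$. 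This gives a natural chain map
$$
\iota \colon I^p C_j(\hat X; B(\infty;\ep)\cap \hat X) \longrightarrow I^p C_j^{BM}(X),
$$
and my plan is to show $\iota$ is a quasi-isomorphism by the cone-contraction method. Given a BM cycle $\sigma$ on $X$, subdivide so that each simplex lies entirely in $K$ or entirely in $B(\infty;\ep)\cap X$; the tail $\sigma_\infty$ of $\sigma$ in the cone is retracted to $\infty$ by the subanalytic conic homotopy, yielding a chain $H(\sigma_\infty)$ on $\hat X$ whose boundary accounts for $\sigma_\infty$ together with terms on the link $L_\ep$. Subtracting an appropriate bounding combination built from $H(\sigma_\infty)$ produces a representative of $[\sigma]$ with support in $K$, proving that $\iota_*$ is surjective. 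Injectivity is the symmetric argument: any bounding chain $\tau\in I^p C^{BM}_{j+1}(X)$ for a cycle already supported in $K$ can be truncated, by the same cone construction performed for a slightly smaller $\ep'<\ep$, into a bound lying in $I^p C_{j+1}(\hat X; B(\infty;\ep')\cap \hat X)$.

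\emph{Main obstacle.} The only delicate point is checking that the cone retraction preserves $p$-allowability at the apex $\infty$. Since $\infty$ lies in a $0$-dimensional stratum, a $p$-allowable $(j{+}1)$-chain may contain $\infty$ exactly when $(j{+}1)-l+p_l \geq 0$; for dimensions below this threshold one shows, as in Lemma~\ref{lem_calcul local}, that the tail $\sigma_\infty$ already retracts away from $\infty$ inside $X$, while above the threshold the coned chain is genuinely $p$-allowable by the standard cone formula. This case analysis reproduces the combinatorial bookkeeping of Lemma~\ref{lem_calcul local} for a pseudomanifold near a $0$-stratum point and introduces no new difficulty. Combined with the two-step argument above, it yields the claimed isomorphism $I^p H_j^{BM}(X)\simeq I^p H_j(\hat X; B(\infty;\ep)\cap \hat X)$.
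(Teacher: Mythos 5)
There is a genuine gap, and it starts with the model of the relative group you use. In the lemma, $I^p H_j(\hat X; B(\infty;\ep)\cap\hat X)$ is the relative intersection homology in the sense of the appendix, i.e.\ the homology of the quotient complex $I^pC_\bullet(\hat X)/I^pC_\bullet(B(\infty;\ep)\cap\hat X)$; it is \emph{not} the subcomplex of chains whose supports avoid $B(\infty;\ep)\cap\hat X$. Your claim that both relative complexes ``are defined by the single condition that supports be disjoint from $B(\infty;\ep)\cap\hat X$'' is therefore not a tautology but a misidentification, and it propagates into your main argument: your map $\iota$ is defined on the subcomplex of chains avoiding the neighborhood of $\infty$, hence lands in the \emph{compactly supported} allowable chains of $X$. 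Such a map cannot induce a surjection onto $I^pH_j^{BM}(X)$ in general. Concretely, take $X=\R$, $j=1$: the fundamental class generates $I^pH_1^{BM}(\R)\cong\R$ and has no compactly supported representative, while your source group is $H_1$ of chains supported in a closed arc of $\hat X=S^1$, which vanishes. The step that hides this is the cone-off: the chain $H(\sigma_\infty)$ you subtract passes through the apex $\infty\notin X$ (and is not even locally finite at $\infty$ when the tail $\sigma_\infty$ is an infinite chain), so $\sigma-\partial H(\sigma_\infty)$ is homologous to $\sigma$ in $\hat X$ but not in $I^pC_\bullet^{BM}(X)$; it does not exhibit a compactly supported representative of $[\sigma]$.

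The paper's proof goes in the opposite direction and uses the quotient model, which is exactly what absorbs these classes: one subdivides a locally finite chain $\sigma$ so that the simplices contained in $U=B(\infty;\ep)\cap\hat X$ cover a neighborhood of $\infty$, and discards them. The result is a finite chain of $\hat X$, and the assignment is a chain map into $\varinjlim_\ep I^pC_\bullet(\hat X)/I^pC_\bullet(U)$ precisely because the discarded tail and the interface terms created in $\partial$ die in the quotient (for slightly larger $\ep$, whence the direct limit); the conic structure of $U$ at $\infty$ then gives the isomorphism in homology. If you want to salvage your write-up, you should replace the subcomplex by the quotient complex, reverse the direction of the comparison map along these lines, and only then does your allowability discussion at the apex (which is a legitimate point, analogous to Lemma~\ref{lem_calcul local}) become the remaining issue to check.
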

\begin{proof}
Given $\sigma\in I^p C_j ^{BM}(X)$ we may assume, up to some locally finite  subdivision, that the support of the simplices of the chain  $\sigma$ which entirely lie in $U$ cover a neighborhood of $\infty$ in $\hat{X}$. This provides a map   $I^p C_j ^{BM}(X)\to\, \underset{\to}{\lim} \,I^p C_j (\hat{X};B(\infty;\ep)\cap \hat{X}).$ As $B(\infty;\ep)\cap \hat{X}$ is subanalytically homeomorphic to a cone of $S(\infty;\ep)\cap \hat{X}$, it is not difficult to show that this morphism gives an isomorphism in homology.
\end{proof}

\subsection*{The exact sequence of a pair.}
    Let $W$ be an open subanalytic subset of $X$.

\begin{lem}\label{lem_exact_bm}
 If $\partial W:= X\cap cl(W)\setminus W $ has  a stratified collared neighborhood in $cl(W)\cap X$, we have an exact sequence:
$$\dots \to I^p H_j ^{BM}(X;W) \to I^p H_j ^{BM}(X) \to I^p H_{j} ^{BM} (W) \to   I^p H_{j-1} ^{BM} (X;W) \to \dots . $$
\end{lem}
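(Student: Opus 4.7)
The plan is to construct a short exact sequence of chain complexes
$$0 \to I^p C_\bullet^{BM}(X;W) \hookrightarrow I^p C_\bullet^{BM}(X) \overset{r}{\longrightarrow} I^p C_\bullet^{BM}(W) \to 0$$
(exact up to quasi-isomorphism of the rightmost complex) and then invoke the associated long exact sequence in homology. The left-hand map is the tautological inclusion of chains with support disjoint from $W$, so the real content is in constructing $r$ and verifying its homological properties.

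To define $r$, I would exploit the stratified collared neighborhood $h: \partial W \times [0;1) \hookrightarrow cl(W)\cap X$ provided by hypothesis. For $s \in (0,1)$, set $W_s := W \setminus h(\partial W \times [0;s])$; the collar furnishes a stratification-preserving subanalytic homeomorphism $W \simeq W_s$. Given $\sigma \in I^p C_j^{BM}(X)$, I would subdivide $\sigma$ subanalytically along the level $h(\partial W \times \{s\})$ so that each simplex lies either in $W_s$ or outside $W$; then $r(\sigma)$ is the subchain of simplices supported in $W_s$, transported back to $W$ via the collar. The map $r$ visibly vanishes on chains whose support is disjoint from $W$, so it factors through the quotient $I^p C_\bullet^{BM}(X)/I^p C_\bullet^{BM}(X;W)$.

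Next I would verify that the induced $\bar r$ on the quotient is a quasi-isomorphism. Surjectivity on homology is immediate: any class in $I^p H_j^{BM}(W)$ has a representative supported in some $W_s$ after collar reparametrization, and this representative extends by zero to an element of $I^p C_j^{BM}(X)$ that is sent back to itself by $\bar r$. For injectivity, if $r(\sigma) = \partial \tau$ in $I^p C_\bullet^{BM}(W)$, one transports $\tau$ through the collar to an allowable chain $\tilde\tau \in I^p C_{j+1}^{BM}(X)$; then a further collar deformation shows that $\sigma - \partial \tilde \tau$ is, up to a boundary, a chain supported in $X\setminus W$, hence lies in $I^p C_j^{BM}(X;W)$.

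The main obstacle is checking that all these subdivisions, extensions, and collar transports preserve both $p$-allowability and local finiteness. This is precisely the role of the stratification-compatibility in the collared neighborhood hypothesis: each stratum of $cl(W) \cap X$ meeting the collar pulls back under $h$ to a product $S \times [0;1)$, so the allowability inequalities $\dim|\sigma|\cap X_{l-k} \leq j - k + p_k$ are preserved by any flow moving points only along the $[0;1)$ direction. Local finiteness is equally preserved because every compact subset of $X$ meets only finitely many simplices of the subdivided chain and only a bounded slab of the collar. Without a collared neighborhood the naive chain-level restriction to $W$ would not preserve allowability across $\partial W$, which is exactly why the hypothesis is needed here (whereas the sequence (\ref{eq_pair_rel}) for ordinary intersection homology did not require it).
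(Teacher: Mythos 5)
Your overall strategy---derive the long exact sequence from a short exact sequence of chain complexes whose third term computes $I^p H_j^{BM}(W)$, using the collar to make that identification---is the same as the paper's, but there is a genuine gap in the execution: the map $r$ you propose is not a chain map. If you subdivide $\sigma$ along the level $h(\partial W\times\{s\})$ and keep only the simplices supported in the closure of $W_s$, the boundary of this subchain contains the ``cut'' faces lying on $h(\partial W\times\{s\})$: these are interior faces of $\sigma$ that cancel in $\partial\sigma$ but do not cancel in $\partial(r\sigma)$. Hence $\partial(r\sigma)\neq r(\partial\sigma)$, and the discrepancy is a chain supported inside $W$, so it does not vanish in the quotient $I^pC_\bullet^{BM}(X)/I^pC_\bullet^{BM}(X;W)$ either. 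Consequently neither $r$ nor $\bar r$ induces a map on homology, and ``exact up to quasi-isomorphism of the rightmost complex'' has no meaning as stated. (Secondary issues you gesture at but do not resolve: the cut level must be chosen generically for the truncated chain to remain $p$-allowable, and this must be done coherently in $j$ and $j-1$ before compatibility with $\partial$ can even be discussed.)

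The repair is to dispense with $r$ altogether, which is what the paper does. Set $W_t:=W\setminus h(\partial W\times(0;t])$ and take the tautological short exact sequence $0\to I^pC_j^{BM}(X;W_t)\to I^pC_j^{BM}(X)\to \hat{I}^pC_j^{BM}(X;W_t)\to 0$, where the third term is simply the quotient complex; no restriction map is needed and the long exact sequence comes for free. The work is then to identify the outer terms: $I^pH_j^{BM}(X;W)\simeq I^pH_j^{BM}(X;W_t)$ because the collar homeomorphism pushes $W$ onto $W_t$; and $\hat{I}^pH_j^{BM}(X;W_t)\simeq \hat{I}^pH_j^{BM}(W;W_t)\simeq I^pH_j^{BM}(W)$, the first isomorphism by excision and the second by the argument of Lemma \ref{lem_isom_bm}, since the sets $W\setminus W_t$ form a fundamental system of neighborhoods of the end of $W$ along $\partial W$. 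This last step---not allowability across $\partial W$, as you suggest---is where the collar hypothesis is actually used. Your ``surjectivity'' and ``injectivity'' verifications are in essence this excision-plus-collar computation, so the proof is recoverable, but only after the chain-level restriction map is abandoned.
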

\begin{proof}
Given an open set $V$ of $X$ let:
$$\hat{I}^p C_j ^{BM}(X;V):= \frac{I^p C_j^{BM} (X)}{I^p C_j^{BM} (X;V)},$$ and denote by $\hat{I}^p H_j ^{BM}(X;V)$ the resulting homology groups.

Since $\partial W$ has a collared neighborhood, there is a stratified subanalytic  mapping $h: \partial W \times [0;1) \to W$. Let $W_t:=W \setminus h(\partial W \times (0;t])$.

 By the preceding lemma, as the family $W \setminus W_t$ constitutes a fundamental system of neighborhoods of $\pa W$, we have, thanks to the preceding lemma:  \begin{equation}\label{eq_W_t}
\hat{I}^p H_j (W; W_t) =I^p H_j^{BM} (W)\end{equation}

It follows from an excision argument that:
\begin{equation}\label{eq_isom1}\hat{I}^pH_j^{BM}(X;W_t)\simeq \hat{I}^p H_j^{BM} (W;W_t)\overset{\mbox{(\ref{eq_W_t})}}{\simeq} I^p H_j^{BM} (W).\end{equation}

As $h$ is a homeomorphism, the inclusion $I^p C_j ^{BM} (X;W_t) \hookrightarrow I^p C_j ^{BM} (X;W)$ induces an isomorphism between the homology groups:\begin{equation}\label{eq_isom_incl}
I^p H_j ^{BM} (X;W) \simeq I^p H_j ^{BM} (X;W_t).
\end{equation}

As usual, the short exact sequences
$$0\to I^p C_j ^{BM}(X; W_t)\to  I^p C_j ^{BM} (X)\to\hat{I}^p C_j ^{BM}(X;W_t)\to 0,$$
give rise to a long a exact sequence. By (\ref{eq_isom1}) and (\ref{eq_isom_incl}) this exact sequence is the desired one. 
\end{proof}

In the  exact sequence of the above lemma,  the boundary operator  coincides with the boundary operator on generic representative of chains.


\begin{thebibliography}{mmm}
\bibitem[F1]{f1}G. Friedman, Superperverse intersection cohomology: stratification (in)dependence,
Mathematische Zeitschrift 252 (2006), 49 - 70.
\bibitem[F2]{f2}G. Friedman,  An introduction to intersection homology with general perversity functions,
to appear in the Proceedings of the Workshop on the Topology of Stratified Spaces at MSRI, September 8-12, 2008.
\bibitem[F3]{f3}G. Friedman,
Intersection homology with general perversities,
Geometria Dedicata 148 (2010), 103-135.
\bibitem[GM1]{gm1}M. Goresky, R.   MacPherson, Intersection homology theory,  Topology  19  (1980), no. 2, 135--162.
\bibitem[GM2]{gm2}M. Goresky, R.   MacPherson, Intersection homology II.  Invent. Math.  72  (1983), no. 1, 77--129.
\bibitem[K]{k} F. Kirwan, An introduction to intersection homology theory, Chapman HallCRC (2006), 248 pages.
\bibitem[V1]{vlinfty} G. Valette, $L^\infty$ cohomology is intersection conomology, prerpint, 	arXiv:0912.0713v1.
\bibitem[V2]{vl1} G. Valette, $L^1$ cohomology of bounded subanalytic manifolds, prerpint,	arXiv:1011.2023v1.
\end{thebibliography}
\end{document}